\title{Einstein nilpotent Lie groups}
\author{Diego Conti and Federico A. Rossi}
\newtheorem{theorem}{Theorem}[section]
\newtheorem{lemma}[theorem]{Lemma}
\newtheorem{remark}[theorem]{Remark}
\newtheorem{corollary}[theorem]{Corollary}
\newtheorem{proposition}[theorem]{Proposition}
\newtheorem{example}[theorem]{Example}
\newcommand{\R}{\mathbb{R}}
\newcommand{\lie}[1]{\mathfrak{#1}}     
\newcommand{\g}{\lie{g}}
\newcommand{\C}{\mathbb{C}}
 \newcommand{\PP}{\mathcal{P}}
\newcommand{\hook}{\lrcorner\,}
\newcommand{\SO}{\mathrm{SO}}
\newcommand{\GL}{\mathrm{GL}}
\newcommand{\SL}{\mathrm{SL}}
\newcommand{\id}{\mathrm{Id}}   
\newcommand{\gl}{\lie{gl}}
\newcommand{\Sl}{\lie{sl}}
\newcommand{\Span}[1]{\operatorname{Span}\left\{#1\right\}}
\newcommand{\tran}[1]{\hspace{.2mm}\prescript{t\hspace{-.5mm}}{}{#1}}
\DeclareMathOperator{\ric}{ric}
\DeclareMathOperator{\Der}{Der}
\DeclareMathOperator{\Ric}{Ric}
\DeclareMathOperator{\End}{End}
\DeclareMathOperator{\Aut}{Aut}
\DeclareMathOperator{\ad}{ad}
\DeclareMathOperator{\Tr}{tr}
\newcolumntype{C}{>{$}c<{$}}
\newcolumntype{L}{>{$}l<{$}}
\newcolumntype{R}{>{$}r<{$}}
\newcommand{\llangle}{\langle\!\langle}
\newcommand{\rrangle}{\rangle\!\rangle}
\begin{document}
\maketitle
\begin{abstract}
We study the Ricci tensor of left-invariant pseudoriemannian metrics on Lie groups. For an appropriate class of Lie groups that contains nilpotent Lie groups, we introduce a variety with a natural $\GL(n,\R)$ action,  whose orbits parametrize Lie groups with a left-invariant metric; we show that the Ricci operator can be identified with the moment map relative to a natural symplectic structure. From this description we deduce that the Ricci operator is the derivative of the scalar curvature $s$ under gauge transformations of the metric, and show that Lie algebra derivations with nonzero trace obstruct the existence of Einstein metrics with $s\neq0$.

Using the notion of nice Lie algebra, we give the first example of a left-invariant Einstein metric with $s\neq0$ on a nilpotent Lie group. We show that nilpotent Lie groups of dimension $\leq 6$ do not admit such a metric, and a similar result holds in dimension $7$ with the extra assumption that the Lie algebra is nice.
\end{abstract}

\renewcommand{\thefootnote}{\fnsymbol{footnote}} 
\footnotetext{\emph{MSC 2010}: 53C50;  53C25, 53D20, 22E25.}
\footnotetext{\emph{Keywords}: Ricci tensor, moment map, Einstein pseudoriemannian metrics, nilpotent Lie groups.}
\footnotetext{This work was partially supported by GNSAGA of INdAM.}
\renewcommand{\thefootnote}{\arabic{footnote}}

The construction of homogeneous Einstein metrics is a classical problem in Riemannian geometry, which is mostly relevant in the case of nonzero scalar curvature, since Ricci-flat homogeneous metrics are necessarily flat \cite{AlekseevskiKimelFel}.

The case of positive scalar curvature corresponds to compact homogeneous spaces $G/K$, due to the well-known theorems of Bochner \cite{Bochner:vector_fields} and Myers \cite{Myers}. In this setting, Einstein metrics arise as critical points of the total scalar curvature functional; this approach has been pursued constructively in \cite{WangZiller}. Whilst a classification of compact homogeneous Einstein manifolds has not yet been obtained, both  necessary and sufficient conditions on a compact $G/K$ for the existence of an Einstein Riemannian metric are known (see \cite{BohmWangZiller, Bohm:nonexistence}).

All known examples of negative scalar curvature arise as solvmanifolds, i.e. left-invariant metrics on solvable Lie groups; in fact, a conjecture of Alexseevsky states that all Riemannian homogeneous Einstein manifolds of negative scalar curvature are of this type. The algebraic structure of these Lie groups is well understood (see \cite{Heber:noncompact,Lauret:Einstein_solvmanifolds});  they are non-compact in an essential way, in the sense that  they are necessarily not unimodular (as originally proved in \cite{Dotti}), and therefore do not admit a cocompact lattice.

This tripartition does not generalize to the pseudoriemannian case. For indefinite metrics, the conditions of positive and negative scalar curvature are somewhat interchangeable, being that a metric $g$ and its opposite $-g$ have the same Ricci tensor. Moreover, there is no direct relation between compactness and scalar curvature: both compact and non-compact Lie groups with Einstein indefinite metrics  with either zero or nonzero scalar curvature are known  \cite{CalvarusoZaeim:neutral, CalvarusoZaeim:lorentzian,Derdzinski}. Other non-compact homogeneous examples appear in the context of parak\"ahler and nearly parak\"ahler geometry (see \cite{AlekseevskyMedoriTomassini:Homogeneous2009,IvanovZamkovoy:parahermitian}).

This greater flexibility is evident in the context of Einstein solvmanifolds, which in the indefinite case are allowed to  have either zero or nonzero scalar curvature, and can admit cocompact lattices. A source of Ricci-flat manifolds is given by bi-invariant metrics on nilpotent Lie groups (see \cite{globke}); if the structure constants are rational, this determines a homogeneous metric on a compact quotient. By relaxing the condition and considering left-invariant metrics, it is possible to obtain  more examples of compact Ricci-flat nilmanifolds (see \cite{Freibert:calibrated, FinoLujan, ContiRossi:ricci}. Some of these examples are not flat, indicating another difference with the Riemannian case. The case of pseudoriemannian Einstein solvmanifolds with nonzero scalar curvature appears to be unexplored at the time of writing, except for the four-dimensional study of \cite{CalvarusoZaeim:lorentzian,CalvarusoZaeim:neutral}.

\smallskip

In this paper we study Einstein pseudoriemannian metrics of nonzero scalar curvature on nilpotent Lie groups; we only consider left-invariant metrics, and  in the rest of this introduction all metrics are implicitly left-invariant. The starting observation is that there are two relevant group of symmetries, namely $\SO(p,q)$ and $\GL(n,\R)$. In the first section, we consider general left-invariant metrics on a Lie group; we exploit the $\SO(p,q)$-invariance to express the Ricci curvature in terms of appropriate equivariant maps. Necessary conditions on the Lie algebra of a nilpotent Lie group for the existence of  an Einstein metric of nonzero scalar curvature are given in Section~\ref{sec:fivedim}; in particular, we show that it must have step $\geq 3$ and that the centre must be contained in the derived algebra (Corollary~\ref{cor:steptwo} and Lemma~\ref{lemma:centroinderivato}). More generally, we focus on the class of unimodular Lie groups with Killing form zero; this condition is intermediate between nilpotent and solvable, and gives the Ricci curvature a particularly simple expression (Proposition~\ref{prop:ricci_in_peace}). 

The set of Lie algebra structures on $\R^n$ is a subvariety of $\Lambda^2(\R^n)^*\otimes \R^n$ (in particular, the subvariety of nilpotent Lie algebras has been studied in \cite{Goze:OnTheVarieties,Lauret}); the unimodular condition singles out an irreducible $\GL(n,\R)$-submodule $V^{11}_1\subset\Lambda^2(\R^n)^*\otimes \R^n$. The choice of a scalar product on $\R^n$ determines an isomorphism $\R^n\cong(\R^n)^*$, and therefore a Lie algebra structure on $(\R^n)^*$. Thus, every $n$-dimensional unimodular Lie group with a left-invariant metric determines a pair  $(a,b)\in V^{11}_1\times (V^{11}_1)^*$, uniquely defined up to $\GL(n,\R)$ action. The Ricci operator, which via $\g\cong\R^n$ is identified with an element of $\gl(n,\R)$, corresponds then to a specific bilinear map
\[\mu\colon V^{11}_1\times (V^{11}_1)^*\to\gl(n,\R)\]
(see Proposition~\ref{prop:c1c2}). We prove that $\mu$ is the moment map for the action of $\GL(n,\R)$, relative to the symplectic structure on $V^{11}_1\times (V^{11}_1)^*$ induced by the natural pairing (Proposition~\ref{prop:moment}). This description, valid both in the Riemannian and the indefinite case, gives a symplectic interpretation of the known identification of the Ricci tensor as a moment map in the sense of real geometric invariant theory (see \cite{Lauret}).

We deduce that on a fixed Lie group, the component $\langle \mu,X\rangle$ obtained by taking the natural pairing with $X\in\gl(n,\R)$, can be identified with the derivative of the scalar curvature as the metric is deformed in the direction of $X$; for $X=\id$, we recover a result of Jensen \cite{Jensen:scalar}, according to which Einstein metrics are critical points for the scalar curvature among metrics of a fixed volume. Another remarkable consequence is that Einstein metrics of nonzero scalar curvature may only exist on a Lie group if all derivations of its Lie algebra are tracefree (Theorem~\ref{thm:ostruzionegl}). This is used to prove that nilpotent Lie groups of dimension $\leq 6$ admit no such metric. The proof is a simple linear computation on a case-by-case basis that uses the classification of \cite{Magnin}.

In the final section we consider nilpotent Lie algebras that admit a nice basis (see \cite{LauretWill:Einstein}); the key property of such a basis is that a metric which is diagonal relative to the basis has a diagonal Ricci tensor. This makes it possible to construct the first example of an Einstein  metric of nonzero scalar curvature on a nilpotent Lie group (Theorem~\ref{thm:einstein_example}). 

Our example is in dimension 8, and this is the lowest dimension in which this approach is fruitful. In fact, using the classification of \cite{Gong} we prove that 7-dimensional nilpotent Lie groups with a nice basis admit a derivation with nonzero trace, and therefore do not carry any Einstein metric of nonzero scalar curvature. We do not know whether any 7-dimensional nilpotent Lie group admits an Einstein metric of nonzero scalar curvature, but we are able to prove that if such an example exists then its Lie algebra must be one of the 11 Lie algebras listed in Theorem~\ref{thm:einstein_seven_in_table}.

\section{Left-invariant pseudoriemannian metrics}
In this section we reproduce the known formula for the Ricci tensor of a left-invariant metric on a Lie group (see e.g. \cite{Milnor:curvatures}), which we revisit from the viewpoint of representation theory by making use of the natural $\SO(p,q)$-invariance. The formulae established in this section will form the basis for subsequent calculations.

Let $G$ be a Lie group with Lie algebra $\g$; a left-invariant pseudoriemannian metric on $G$ can be identified with a non-degenerate bilinear form $\llangle\,,\rrangle$ on $\g$; the Levi-Civita connection and its curvature $R$ are then elements of the tensor algebra over $\g$. We shall refer to $\llangle,\rrangle$ as a (pseudoriemannian) metric on $\g$, and to $R$ as its curvature.

In this section, such a metric will be identified by an orthonormal basis $\{e_i\}$ of $\g$, where $\llangle e_i,e_i\rrangle=\epsilon_i=\pm1$. Writing $\llangle [e_i,e_j],e_k\rrangle =c_{ijk}$, the Koszul formula gives
\[\nabla_{e_i} e_j=\frac12( c_{ijk}+c_{kij}+c_{kji})\epsilon_k e_k;\]
here and throughout the paper, summation over repeated indices is implied.

The Riemann tensor is therefore given by
\begin{multline*}
R(e_i,e_j,e_h,e_i)
=\frac12 \epsilon_k  c_{iki}( c_{jhk}+c_{kjh}+c_{khj})\\
-\frac14\epsilon_k (c_{ihk}+c_{khi}+c_{kih})(c_{jki}+c_{ikj}+c_{ijk})\\
-\frac12\epsilon_k c_{ijk}(c_{khi}+c_{ihk}+c_{ikh}),
\end{multline*}
and the Ricci tensor by
\begin{multline}
\label{eqn:ricci_indices}
\Ric(e_j,e_h)=\sum_i \epsilon_i R(e_i,e_j,e_h,e_i)
=\frac12 \epsilon_k \epsilon_i c_{iki}(c_{kjh}+c_{khj})+\frac14\epsilon_k \epsilon_ic_{ikh}c_{ikj}\\
-\frac12\epsilon_k\epsilon_i c_{ijk}c_{khi}+\frac12 \epsilon_k \epsilon_i c_{iki}c_{jhk}-\frac12\epsilon_k\epsilon_i c_{ijk}c_{ihk}.
\end{multline}
Note that the term $\epsilon_k \epsilon_i c_{iki}c_{jhk}$ vanishes because of the well-known identity
\begin{equation}
\label{eqn:Killingsymmetric}
\Tr \ad [e_j,e_k]=0.
\end{equation}
Denote by $U_{a_1,\dotsc, a_k}$ the irreducible real representation of $\SO(p,q)$ with maximal weight $a_1L_1+\dots + a_kL_k$; let $T=U_1$ be the standard representation of $\SO(p,q)$. Having fixed a metric on $\g$, an isomorphism $\g\cong T$ is induced; the Lie bracket gives then an element of $\Lambda^2T^*\otimes T$. More precisely, the Lie bracket satisfies the Jacobi identity, so it lies in the zero set of a quadratic map $\Lambda^2T^*\otimes T\to \Lambda^3T^*\otimes T$, which we view as a linear equivariant map
\begin{equation}
\label{eqn:jacobi}
S^2(\Lambda^2T^*\otimes T)\to \Lambda^3T^*\otimes T.
\end{equation}
In these terms, the identity \eqref{eqn:Killingsymmetric} can be explained by the fact that $\Lambda^3T^*\otimes T$ contains a copy of $\Lambda^2T^*$.

Thus, the structure constants $\{c_{ijk}\}$ belong to the $\SO(p,q)$-module $\Lambda^2T^*\otimes T\cong \Lambda^3T^*+T^*+U_{21}$, where the first two components correspond to
\[\frac12(c_{ijk}-c_{ikj})e^{ijk}, \quad \epsilon_j c_{ijj} e^i.\]
Much like the Jacobi identity, the Ricci  tensor as a function of the $\{c_{ijk}\}$ can be viewed as a linear equivariant map
\[S^2(\Lambda^2T^*\otimes T)\to S^2T^*=\R+U_2.\]
Notice that the linear constraints on the domain of this map established by the Jacobi identity are transparent to this map, as $\Lambda^3T^*\otimes T$ does not contain submodules isomorphic to either $\R$ or $U_2$.

The $\SO(p,q)$-module $S^2(\Lambda^2T^*\otimes T)$ contains three copies of $\R$ (one for each irreducible submodule of $\Lambda^2T^*\otimes T)$ and six copies of $U_2$, namely
\begin{gather*}
U_2\subset T\otimes U_{21}, \quad U_2\subset S^2T, \quad 2U_2\subset S^2(U_{21}),\\
U_2\subset S^2(\Lambda^3T^*),\quad U_2\subset U_{21}\otimes\Lambda^3T^*.
\end{gather*}
Therefore, to a Lie algebra with a fixed metric one can associate three scalar invariants and six trace-free bilinear forms. For an explicit representation, it is more natural to define six bilinear forms:
\begin{align*}
B_1(v,w)&=\Tr \ad (v\hook dw^\flat+w\hook dv^\flat)^\sharp,&
B_2(v,w)&= \Tr(\ad(v))\Tr(\ad(w)), \\ 
B_3(v,w)&=\llangle \ad(v),\ad(w)\rrangle,&B_4(v,w)&= \Tr(\ad(v)\circ \ad(w)), \\
B_5(v,w)&= \llangle dv^\flat, dw^\flat\rrangle,& B_6(v,w)&=\llangle \ad(v)^\flat, dw^\flat\rrangle+\llangle \ad(w)^\flat, dv^\flat\rrangle.
\end{align*}
The scalar invariants can then be recovered by taking the trace of $B_2$, $B_3$ and $B_4$.

\begin{lemma}
\label{lemma:ricci_in_general}
The Ricci tensor of a left-invariant pseudoriemannian metric on a Lie group is given by
\begin{multline*}
\Ric(v,w)=-\frac12 \Tr \ad (v\hook dw^\flat+w\hook dv^\flat)^\sharp+\frac12\llangle dv^\flat, dw^\flat\rrangle\\
-\frac12\llangle \ad(v),\ad(w)\rrangle-\frac12\Tr(\ad(v)\circ \ad(w)).
\end{multline*}
\end{lemma}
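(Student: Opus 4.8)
The plan is to obtain the intrinsic formula by a direct comparison with the index expression \eqref{eqn:ricci_indices}. Both sides of the asserted identity are symmetric bilinear forms in $v$ and $w$, so it suffices to verify equality for $v=e_j$, $w=e_h$ with $\{e_i\}$ an orthonormal basis as in the statement; for such vectors $\Ric(e_j,e_h)$ is given by \eqref{eqn:ricci_indices}. I will then recognize the four nonvanishing terms of \eqref{eqn:ricci_indices}, evaluated on $(e_j,e_h)$, as $-\tfrac12 B_1$, $\tfrac12 B_5$, $-\tfrac12 B_4$ and $-\tfrac12 B_3$ (in the order written), so that the right-hand side of the lemma is exactly their sum.

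The first step is to translate the operations occurring in $B_1,\dots,B_6$ into the structure constants $c_{ijk}$. From $[e_i,e_j]=\epsilon_k c_{ijk}e_k$ one reads off that $\ad(e_j)$ has matrix entry $\epsilon_k c_{jik}$ in row $k$, column $i$, hence $\Tr\ad(e_j)=\epsilon_k c_{jkk}$ and, by antisymmetry, $\epsilon_i c_{iki}=-\Tr\ad(e_k)$; moreover $e_j^\flat=\epsilon_j e^j$, $de_j^\flat=-\tfrac12 c_{ikj}e^{ik}$, $e_j\hook de_h^\flat=-c_{jah}e^a$, $(e_j\hook de_h^\flat)^\sharp=-\epsilon_a c_{jah}e_a$, and the induced metrics on $\Lambda^2T^*$ and on $\End(\g)$ are computed by the usual rules. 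A routine substitution, using only the antisymmetry $c_{ijk}=-c_{jik}$ together with relabellings of the summed indices, then gives
\[
B_1(e_j,e_h)=-\epsilon_i\epsilon_k c_{iki}(c_{kjh}+c_{khj}),\qquad B_5(e_j,e_h)=\tfrac12\epsilon_i\epsilon_k c_{ikj}c_{ikh},
\]
\[
B_4(e_j,e_h)=\epsilon_i\epsilon_k c_{ijk}c_{khi},\qquad B_3(e_j,e_h)=\epsilon_i\epsilon_k c_{ijk}c_{ihk},
\]
which are exactly $-2$, $2$, $-2$ and $-2$ times the first, second, third and fifth terms of \eqref{eqn:ricci_indices}. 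The remaining (fourth) term $\tfrac12\epsilon_k\epsilon_i c_{iki}c_{jhk}$ equals $-\tfrac12\Tr\ad[e_j,e_h]$ and vanishes by \eqref{eqn:Killingsymmetric}, as already observed. Adding up the contributions yields $\Ric(e_j,e_h)=-\tfrac12 B_1+\tfrac12 B_5-\tfrac12 B_3-\tfrac12 B_4$ on basis vectors, and the lemma follows by bilinearity.

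I do not expect a genuine obstacle: the content is bookkeeping, and the only delicate points are keeping track of the signs and of the $\epsilon_i$-weights implicit in $\flat$, $\sharp$, $d$ and $\hook$, and spotting the identity $\epsilon_i c_{iki}=-\Tr\ad(e_k)$, which is what converts the \emph{unimodular defect} term of \eqref{eqn:ricci_indices} into the $B_1$ term. As an alternative to the explicit index matching one could argue via the representation theory recalled before the statement: $\Ric$ is an $\SO(p,q)$-equivariant quadratic function of the structure constants valued in $S^2T^*=\R+U_2$, hence a fixed linear combination of the invariantly defined bilinear forms, whose coefficients could be pinned down by evaluation on a few explicit Lie algebras with metric; but the direct verification above is just as short and entirely self-contained.
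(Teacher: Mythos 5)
Your proposal is correct and follows essentially the same route as the paper: express $B_1,B_3,B_4,B_5$ in terms of the structure constants $c_{ijk}$ in an orthonormal frame, observe that the term $\tfrac12\epsilon_k\epsilon_i c_{iki}c_{jhk}$ drops out by \eqref{eqn:Killingsymmetric}, and match the remaining terms of \eqref{eqn:ricci_indices} to obtain $\Ric=-\tfrac12 B_1+\tfrac12 B_5-\tfrac12 B_3-\tfrac12 B_4$. Your coordinate expressions for the $B_i$ agree with the paper's up to the antisymmetry $c_{ijk}=-c_{jik}$ and relabelling of summed indices, so the identification of coefficients is the same.
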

\begin{proof}
The bilinear forms can be expressed as
\begin{align*}
B_1(e_j,e_h)&=c_{iki}(c_{jkh}+c_{hkj})\epsilon_i\epsilon_k &
B_2(e_j,e_h)&=c_{jii}c_{hkk}\epsilon_i\epsilon_k\\
B_3(e_j,e_h)&=c_{jik} c_{hik}\epsilon_i\epsilon_k&
B_4(e_j,e_h)&=c_{hik} c_{jki} \epsilon_i \epsilon_k \\
B_5(e_j,e_h)&=\frac12c_{ikj}c_{ikh} \epsilon_i \epsilon_k&
B_6(e_j,e_h)&=-c_{jik}c_{ikh}\epsilon_i\epsilon_k.
\end{align*}
By \eqref{eqn:ricci_indices}, we obtain
\[\Ric(v,w)=-\frac12B_1+\frac12B_5-\frac12B_3-\frac12B_4.\qedhere\]
\end{proof}

\begin{remark}
In the case of a bi-invariant pseudoriemannian metric, one has 
\[\llangle [v,x], y\rrangle +\llangle x,[v,y]\rrangle=0,\]
meaning that $\{c_{ijk}\}$ belongs to $\Lambda^3T^*$. The invariants then satisfy
\[B_1=B_2=0, \quad B_4=-B_3=-2B_5;\]
consistently with \cite{oneill:semiriemannian} one obtains
\[\Ric(v,w)=-\frac14\Tr(\ad(v)\circ \ad(w)),\]
i.e. the Ricci tensor is a multiple of the Killing form. Thus, non-Ricci-flat Einstein bi-invariant metrics only exist on semisimple Lie groups (see also \cite{globke}). On the other hand, nilpotent Lie groups with bi-invariant metrics are necessarily Ricci-flat. Moreover, the existence of a non-degenerate bi-invariant metric puts constraints on the structure constants; for instance \cite{Medina:groupes} shows that non-abelian nilpotent Lie groups do not admit Lorentzian bi-invariant metrics.
\end{remark}

\begin{remark}
Denoting by $Z$ the vector in $\g$ defined by
\[\llangle Z,v\rrangle = \Tr\ad(v),\]
i.e. $Z = \epsilon_i\epsilon_k c_{ikk}e^i$, we can rewrite 
\begin{align*}
-\frac12 \Tr \ad (v\hook dw^\flat+w\hook dv^\flat)^\sharp&=-\frac12 (dw^\flat(v,Z)+dv^\flat(w,Z))\\
&=\frac12(\llangle [v,Z],w\rrangle +\llangle[w,Z],v\rrangle).
\end{align*}
This leads to the formula of \cite{Besse:Einstein1987},
\[\Ric(v,v)=-\frac12\llangle [v,e_i],[v,e_i]\rrangle -\frac12 B(v,v)+\frac14 \llangle [e_i,e_j],v\rrangle ^2 - \llangle [Z,v],v\rrangle.\]
\end{remark}

\section{Nonexistence of Einstein metrics with $s\neq0$}
\label{sec:fivedim}
In this section we  specialize to the nilpotent case, and give examples of nilpotent Lie algebras that do not admit Einstein metrics of nonzero scalar curvature. 

The results of this section are based on the following specialization of Lemma~\ref{lemma:ricci_in_general}:
\begin{proposition}
\label{prop:ricci_in_peace}
For $\g$ a unimodular Lie algebra with Killing form zero, and any metric  $\llangle\,, \rrangle$,
\[\Ric(v,w)=\frac12\llangle dv^\flat, dw^\flat\rrangle -\frac12\llangle \ad(v),\ad(w)\rrangle.\]
\end{proposition}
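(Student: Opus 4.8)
The plan is to obtain this as a direct specialization of Lemma~\ref{lemma:ricci_in_general}, checking that two of the four terms in that formula vanish under the two hypotheses. Using the notation of the proof of that lemma, write
\[\Ric(v,w)=-\tfrac12 B_1(v,w)+\tfrac12 B_5(v,w)-\tfrac12 B_3(v,w)-\tfrac12 B_4(v,w),\]
where $B_5(v,w)=\llangle dv^\flat,dw^\flat\rrangle$ and $B_3(v,w)=\llangle\ad(v),\ad(w)\rrangle$ are exactly the two terms appearing in the asserted identity. So the entire content is to show that $B_1$ and $B_4$ are identically zero.

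First I would dispose of $B_4(v,w)=\Tr(\ad(v)\circ\ad(w))$: this is by definition the Killing form of $\g$, which is zero by hypothesis.

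Next I would handle $B_1(v,w)=\Tr\ad(v\hook dw^\flat+w\hook dv^\flat)^\sharp$ by invoking the computation recorded in the Remark following Lemma~\ref{lemma:ricci_in_general}. Introducing the vector $Z\in\g$ determined by $\llangle Z,v\rrangle=\Tr\ad(v)$, that remark gives $-\tfrac12 B_1(v,w)=\tfrac12\bigl(\llangle[v,Z],w\rrangle+\llangle[w,Z],v\rrangle\bigr)$. Since $\g$ is unimodular, $\Tr\ad(v)=0$ for every $v\in\g$, i.e.\ $Z=0$, and hence $B_1\equiv0$. Substituting $B_1=B_4=0$ into the displayed expression for $\Ric$ yields the claim.

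There is essentially no obstacle here: the proposition is an immediate corollary of the general formula once one identifies which invariants are controlled by the hypotheses. The only point that deserves a word of care is that "unimodular" must be read in the infinitesimal sense $\Tr\ad(v)=0$ for all $v\in\g$ — the version that forces $Z=0$ — which is the standard convention and is consistent with its use throughout the paper.
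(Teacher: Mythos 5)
Your proposal is correct and follows exactly the route the paper intends: Proposition~\ref{prop:ricci_in_peace} is stated there as a direct specialization of Lemma~\ref{lemma:ricci_in_general}, with $B_4$ vanishing because it is the Killing form and $B_1$ vanishing because unimodularity forces $Z=0$ (equivalently $\epsilon_i c_{iki}=-\Tr\ad(e_k)=0$). Nothing is missing.
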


Let $\g$ be a nilpotent Lie algebra, and let $\g\supset \g^1\supset \dots\supset \g^k \supset \{0\}$ be its lower central series. Then $\g^k$ is contained in the centre $Z$.
\begin{lemma}
\label{lemma:centroinderivato}
Let $\lie{g}$ be a nilpotent Lie algebra with an Einstein metric of nonzero scalar curvature. Then 
\[Z\subset \g^1.\]
\end{lemma}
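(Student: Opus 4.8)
The plan is to read off what the simplified Ricci formula of Proposition~\ref{prop:ricci_in_peace} says about central vectors, and then to conclude from the Einstein condition together with non-degeneracy of the metric. Since $\g$ is nilpotent it is unimodular with zero Killing form, so that proposition applies: $\Ric(v,w)=\tfrac12\llangle dv^\flat,dw^\flat\rrangle-\tfrac12\llangle\ad(v),\ad(w)\rrangle$.

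I would first isolate two elementary observations. If $z$ lies in the centre $Z$ then $\ad(z)=0$, so the second term drops out and
\[\Ric(z,w)=\tfrac12\llangle dz^\flat,dw^\flat\rrangle\qquad\text{for every }w\in\g.\]
On the other hand $dw^\flat(x,y)=-\llangle w,[x,y]\rrangle$, so $dw^\flat=0$ as soon as $w$ is orthogonal to the derived algebra $\g^1=[\g,\g]$. Writing $(\g^1)^\perp$ for this orthogonal complement, the two observations combine to give $\Ric(z,w)=0$ for every $z\in Z$ and every $w\in(\g^1)^\perp$.

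To finish, the metric being Einstein with $s\neq0$ means $\Ric=\lambda\llangle\,,\rrangle$ with $\lambda=s/\dim\g\neq0$, so the previous line forces $\llangle z,w\rrangle=0$ for all $z\in Z$, $w\in(\g^1)^\perp$; that is, $Z\subseteq\bigl((\g^1)^\perp\bigr)^\perp$, which equals $\g^1$ because the metric is non-degenerate. This is not a deep argument, but there is one point to be attentive to: the metric being indefinite, one cannot shortcut the computation by setting $w=z$ and invoking positivity of $\norm{dz^\flat}$ — a null central vector could a priori have $dz^\flat\neq0$ — so it is essential to pair $z$ against the whole subspace $(\g^1)^\perp$, which is precisely the kernel of $w\mapsto dw^\flat$. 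The only property of the metric used beyond Proposition~\ref{prop:ricci_in_peace} is its non-degeneracy, exactly what makes the passage from $Z\perp(\g^1)^\perp$ to $Z\subseteq\g^1$ legitimate.
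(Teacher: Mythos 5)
Your proof is correct and follows essentially the same route as the paper: apply Proposition~\ref{prop:ricci_in_peace} to a central $v$ and a $w\perp\g^1$ (so that $\ad v=0$ and $dw^\flat=0$), deduce $\Ric(v,w)=0$, and use the Einstein condition with $s\neq0$ plus non-degeneracy to get $Z\subset\bigl((\g^1)^\perp\bigr)^\perp=\g^1$. Your remark about why one must pair against all of $(\g^1)^\perp$ rather than setting $w=z$ is a sensible caution in the indefinite setting, but the argument itself is the paper's.
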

\begin{proof}
If $v$ is in the centre and $w$ is orthogonal to $\g^1=[\lie{g},\lie{g}]$, then both $\ad v$ and $dw^\flat$ are zero; by Proposition~\ref{prop:ricci_in_peace}, $\Ric(v,w)=0$; because the Ricci tensor is a nonzero multiple of the metric, this implies that $v$ and $w$ are orthogonal. Thus, $Z$ is orthogonal to $(\g^1)^\perp$; by non-degeneracy of the metric, the statement follows.
\end{proof}
It follows from this lemma that Einstein metrics of nonzero scalar curvature cannot exist on reducible Lie algebras of the form $\g\oplus\R^n$, with $\g$ nilpotent.

We will need the following:
\begin{lemma}
\label{lemma:trace}
Let $V$, $W$ be  vector spaces endowed with a scalar product, and assume the scalar product on $V$ is not degenerate. For any linear map $f\colon V\to W$, 
\[\llangle f, f\rrangle = \Tr \tilde h,\]
where $\llangle f,f\rrangle$ is computed relatively to the induced scalar product on $V^*\otimes W$, and 
\[\tilde h\colon V\to V, \quad \tilde h(v)=\llangle f(v), f(\cdot)\rrangle^\sharp.\]
If $U\subset V$ and $\pi_U\colon V\to U$ is any projection (i.e. any left inverse of the inclusion), the trace of the composition
\[U\to V\xrightarrow{\tilde h} V\xrightarrow{\pi_U} U\]
coincides with $\llangle f|_U\circ \pi_U, f\rrangle$.
\end{lemma}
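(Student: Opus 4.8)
The plan is to establish both equalities by a direct computation in a pseudo-orthonormal basis, with the second part reduced to the first via a bilinear reformulation and the cyclic invariance of the trace.

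For the first equality, fix a basis $\{e_i\}$ of $V$ with $\llangle e_i,e_j\rrangle=\epsilon_i\delta_{ij}$, $\epsilon_i=\pm1$, which is possible since the scalar product on $V$ is non-degenerate. Viewing $f$ as an element of $V^*\otimes W$ and using the induced scalar product, one gets $\llangle f,f\rrangle=\sum_i\epsilon_i\llangle f(e_i),f(e_i)\rrangle$. On the other hand, unwinding the definition of $\sharp$ relative to the scalar product on $V$ gives $\tilde h(e_i)=\sum_j\epsilon_j\llangle f(e_i),f(e_j)\rrangle e_j$, whose $e_i$-component is $\epsilon_i\llangle f(e_i),f(e_i)\rrangle$; summing over $i$ yields $\Tr\tilde h=\sum_i\epsilon_i\llangle f(e_i),f(e_i)\rrangle$, which coincides with $\llangle f,f\rrangle$. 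Note that the scalar product on $W$ is allowed to be degenerate throughout, as it enters only through the expression $\llangle f(v),f(w)\rrangle$, whereas $\tilde h$ is well defined precisely because the one on $V$ is not.

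For the second equality, I would first record the bilinear version of the above: given linear maps $A,B\colon V\to W$, define $h_{A,B}\colon V\to V$ by $\llangle h_{A,B}(v),x\rrangle=\llangle A(v),B(x)\rrangle$ for all $x\in V$; the same computation (or polarization of the first part applied to $A\pm B$) gives $\llangle A,B\rrangle=\Tr h_{A,B}$, the left-hand side being computed in $V^*\otimes W$. Now let $\iota\colon U\hookrightarrow V$ be the inclusion, so that $\pi_U\circ\iota=\id_U$ and $f|_U\circ\pi_U=f\circ\iota\circ\pi_U$. Applying the bilinear version with $A=f\circ\iota\circ\pi_U$ and $B=f$, one checks directly from the defining relations that $h_{A,B}=\tilde h\circ\iota\circ\pi_U$, whence $\llangle f|_U\circ\pi_U,f\rrangle=\Tr(\tilde h\circ\iota\circ\pi_U)$. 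Finally, since $\tilde h\circ\iota\colon U\to V$ and $\pi_U\colon V\to U$, the identity $\Tr(PQ)=\Tr(QP)$ gives $\Tr(\tilde h\circ\iota\circ\pi_U)=\Tr(\pi_U\circ\tilde h\circ\iota)$, the latter being the trace of the endomorphism of $U$ described in the statement.

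The computations are elementary; the only points requiring care are bookkeeping the musical isomorphism $\sharp$ with respect to the (possibly indefinite) scalar product on $V$, and distinguishing the trace of an endomorphism of $V$ from that of an endomorphism of $U$ when invoking cyclicity of the trace. I expect no genuine obstacle here.
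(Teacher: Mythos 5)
Your proof is correct and follows essentially the same computational route as the paper for the first identity: both expand $\tilde h$ and $\llangle f,f\rrangle$ in a basis (the paper uses a general basis with the Gram matrix $g^{ij}$ of the scalar product on $V^*$, you use a pseudo-orthonormal one; the two are equivalent). For the second identity the paper chooses a basis adapted to $U$ and $\ker\pi_U$ and recomputes the trace directly, whereas you polarize to the bilinear identity $\llangle A,B\rrangle=\Tr h_{A,B}$ and conclude from $h_{f|_U\circ\pi_U,\,f}=\tilde h\circ\iota\circ\pi_U$ together with cyclic invariance of the trace; both arguments are sound, and yours has the small advantage of being basis-free at that step.
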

\begin{proof}
Let $e_1,\dotsc, e_n$ be a basis of $V$, and let $(g^{ij})$ be the matrix defining the scalar product on $V^*$. Then
\[\tilde h(u)=g^{ij}\llangle f(u),f(e_j)\rrangle e_i,\]
so
\[\Tr \tilde h=g^{kj}\llangle f(e_k),f(e_j)\rrangle=\llangle e^k\otimes f(e_k), e^j\otimes f(e_j)\rrangle.\]
If $U\subset \R^n$ is spanned by $e_1,\dotsc, e_l$ and $\pi_U$ is the projection that annihilates $e_{l+1},\dotsc, e_n$, then 
\[\pi_u(\tilde h(u))=\sum_{i=1}^l \sum_{j=1}^n g^{ij}\llangle f(u),f(e_j)\rrangle e_i,\]
so the trace of the composition is
\[\sum_{k=1}^{l}\sum_{j=1}^n g^{kj}\llangle f(e_k),f(e_j)\rrangle=\llangle f|_U\circ\pi_U,f\rrangle.\qedhere\]
\end{proof}
This lemma will be applied to the operators
\[\tilde B_3,\tilde B_5\colon \g\to\g, \quad \llangle \tilde B_3(v),w\rrangle = B_3(v,w), \quad \llangle\tilde B_5(v), w\rrangle = B_5(v,w);\]
in particular the first part gives
\begin{equation}
\label{eqn:traceB3B5}
\Tr \tilde B_3=\llangle \ad,\ad\rrangle=2\llangle d,d\rrangle, \quad \Tr \tilde B_5=\llangle d,d\rrangle,
\end{equation}
and therefore
\begin{equation}
\label{eqn:traceofricci}
\Tr \ric = \frac12\llangle d,d\rrangle - \frac12\llangle \ad, \ad\rrangle = -\frac12\llangle d,d\rrangle.
\end{equation}

\begin{proposition}
\label{prop:MandN}
Let $\g$ be a nilpotent Lie algebra and let $g$ be a metric. Let $\ad(\g)$ be the image of $\g$ in $Z^o\otimes \g^1$. Let $d(\g^*)$ be the image of $\g^*$ in $\Lambda^2 Z^o$. Let $M$ be the null space of $\ad(\g)$ and let $N$ be the null space of $d(\g^*)$. If
\[\dim M + \dim N \geq\dim\g^1-\dim Z,\]
then $g$ is not Einstein unless it is Ricci-flat.
\end{proposition}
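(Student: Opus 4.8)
The plan is to run everything through the two self-adjoint operators $\tilde B_3,\tilde B_5\colon\g\to\g$ and the trace identity $\Tr\tilde B_3=2\Tr\tilde B_5$ from \eqref{eqn:traceB3B5}. Since $\g$ is nilpotent it is unimodular with zero Killing form, so Proposition~\ref{prop:ricci_in_peace} gives $\ric=\tfrac12(\tilde B_5-\tilde B_3)$ for the Ricci operator, and the argument will combine a dimension count on the kernels of these operators with that identity.

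The first thing I would do is express $\ker\tilde B_3$ and $\ker\tilde B_5$ in terms of $M$ and $N$. For $v\in\g$ one has $\ad v\in\ad(\g)$, so the condition $\tilde B_3 v=0$, which says that $\ad v$ is $\llangle\,,\rrangle$-orthogonal to $\ad(\g)$, holds precisely when $\ad v$ lies in the null space $M$; since $\ker\ad=Z$ this gives $\dim\ker\tilde B_3=\dim Z+\dim M$. In the same way $\tilde B_5 v=0$ if and only if $dv^\flat\in N$, and since $v\mapsto dv^\flat$ has kernel $(\g^1)^\perp$ of dimension $\dim\g-\dim\g^1$, we get $\dim\ker\tilde B_5=\dim\g-\dim\g^1+\dim N$. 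Hence the hypothesis $\dim M+\dim N\geq\dim\g^1-\dim Z$ is nothing but the inequality $\dim\ker\tilde B_3+\dim\ker\tilde B_5\geq\dim\g$.

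Next I would assume for contradiction that $g$ is Einstein with Ricci operator $\lambda\,\id$ and $\lambda\neq0$. On $\ker\tilde B_3\cap\ker\tilde B_5$ both summands of $\ric$ vanish, hence so does $\ric$; as $\ric=\lambda\,\id$ is injective, this intersection is trivial, and combined with the dimension inequality above it follows that $\g=A\oplus B$ with $A:=\ker\tilde B_3$ and $B:=\ker\tilde B_5$. On $A$ we have $\ric=\tfrac12\tilde B_5$, so $\tilde B_5|_A=2\lambda\,\id_A$; on $B$ we have $\ric=-\tfrac12\tilde B_3$, so $\tilde B_3|_B=-2\lambda\,\id_B$. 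In particular each of $A,B$ is invariant under both $\tilde B_3$ and $\tilde B_5$, so the traces split along $\g=A\oplus B$ as
\[\Tr\tilde B_5=2\lambda\dim A,\qquad\Tr\tilde B_3=-2\lambda\dim B.\]
Comparing with $\Tr\tilde B_3=2\Tr\tilde B_5$ gives $\lambda(2\dim A+\dim B)=0$, and since $\g^1\subsetneq\g$ for nilpotent $\g$ we have $\dim B\geq\dim(\g^1)^\perp>0$, forcing $\lambda=0$, a contradiction. So a metric satisfying the hypothesis is Einstein only if it is Ricci-flat.

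The step I expect to be the main obstacle is the kernel computation of the second paragraph, because of the pseudoriemannian degeneracies in play: the induced pairings on $\ad(\g)\subset Z^o\otimes\g^1$ and on $d(\g^*)\subset\Lambda^2 Z^o$ may be degenerate even for a generic metric, so one has to verify with some care that ``$\ad v$ orthogonal to $\ad(\g)$'' is the same as ``$\ad v\in M$'', together with the analogous statement for $N$, and that the resulting dimension formulas carry no hidden assumptions. Everything after that is formal, using only self-adjointness of $\tilde B_3,\tilde B_5$ and the identity \eqref{eqn:traceB3B5}.
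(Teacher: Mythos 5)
Your proof is correct and follows essentially the same route as the paper's: identify $\ker\tilde B_3=\ad^{-1}(M)$ and $\ker\tilde B_5=(d^{-1}(N))^\sharp$, use the Einstein condition with $\lambda\neq0$ together with the dimension hypothesis to force $\g$ to split as the direct sum of these kernels, and derive a contradiction from the trace identity $\Tr\tilde B_3=2\Tr\tilde B_5$ of \eqref{eqn:traceB3B5}. The only cosmetic difference is that the paper normalizes $\tilde B_5-\tilde B_3=\id$ and phrases the contradiction as the two traces having opposite signs, while you keep $\lambda$ explicit and conclude $\lambda(2\dim A+\dim B)=0$.
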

\begin{proof}
If $g$ is Einstein with nonzero scalar curvature, up to normalization we can assume
\begin{equation}
\label{eqn:einstein_normalized}
 \tilde B_5-\tilde B_3=\id.
\end{equation}
Write $M_\g=\ad^{-1}(M)$, $N_\g=(d^{-1}(N))^\sharp$. We have $\tilde B_5|_{N_\g}=0$, $\tilde B_3|_{M_\g}=0$. Thus, $N_\g$ and $M_\g$ are in direct sum. Since
\[\dim M_\g=\dim Z+\dim M, \quad \dim N_\g=\dim \g-\dim \g^1+\dim N,\]
a dimensional count gives
\[\g=M_\g\oplus N_\g.\]
By \eqref{eqn:einstein_normalized},
\[\tilde B_5|_{M_\g}=\id_{M_\g}, \quad \tilde B_3|_{N_\g}=-\id_{N_\g}.\]
This implies that $\Tr\tilde B_5$ and $\Tr\tilde B_3$ have opposite signs, which contradicts \eqref{eqn:traceB3B5}.
\end{proof}

\begin{corollary}
\label{cor:steptwo}
If $\g$ is nilpotent of step two, all  Einstein metrics are Ricci-flat. 
\end{corollary}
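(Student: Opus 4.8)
The plan is to obtain this as an immediate consequence of Proposition~\ref{prop:MandN}. The starting point is the observation that a step-two nilpotent Lie algebra $\g$ has central derived algebra: by definition $\g^2=[\g^1,\g]=[[\g,\g],\g]=0$, which is precisely the statement that $\g^1\subseteq Z$. Hence $\dim\g^1\le\dim Z$, so $\dim\g^1-\dim Z\le 0$.

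Next I would let $g$ be an arbitrary metric on $\g$, and let $M$ and $N$ be the subspaces associated with $g$ as in Proposition~\ref{prop:MandN}. Since $\dim M+\dim N\ge 0\ge \dim\g^1-\dim Z$, the hypothesis of that proposition is satisfied, so $g$ cannot be Einstein unless it is Ricci-flat. As $g$ was arbitrary, every Einstein metric on $\g$ is Ricci-flat, which is exactly the assertion of the corollary.

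There is essentially no obstacle here: once Proposition~\ref{prop:MandN} is available, the only thing that has to be checked is the inclusion $\g^1\subseteq Z$, which is the defining property of step-two nilpotency, together with the trivial bound $\dim M,\dim N\ge 0$. The one point deserving a moment's care is the direction of the inequality in Proposition~\ref{prop:MandN}: its hypothesis becomes vacuous precisely when $\dim\g^1-\dim Z\le 0$, i.e.\ when the derived algebra is no larger than the centre, and this is exactly the regime that step-two Lie algebras fall into. (By the same token one sees why the argument no longer applies at step three, where $\g^1$ may be strictly larger than $Z$, so that a genuine constraint on $M$ and $N$ is needed.)
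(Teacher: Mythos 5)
Your proof is correct and follows essentially the same route as the paper: both reduce the corollary to Proposition~\ref{prop:MandN} via the step-two inclusion $\g^1\subseteq Z$. The paper additionally invokes Lemma~\ref{lemma:centroinderivato} to get the equality $Z=\g^1$ before applying the proposition, whereas you observe directly that $\dim\g^1-\dim Z\le 0$ already makes its hypothesis vacuous---a slightly cleaner version of the same argument.
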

\begin{proof}
The step two condition means that $\g^2=0$, namely $\g^1\subset Z$. If an Einstein metric with nonzero scalar curvature exists, then Lemma~\ref{lemma:centroinderivato} gives $Z\subset \g^1$, and therefore $Z=\g^1$. Proposition~\ref{prop:MandN} gives the statement.
\end{proof}

\begin{remark}
A similar result under slightly stronger assumptions was proved in \cite{DelBarcoOvando}.
\end{remark}

What we have proved so far is sufficient to prove the following:
\begin{proposition}
\label{prop:somenil5}
The nilpotent Lie algebras
\begin{gather*}
(0,0,0,12),\quad (0,0,0,12,14), \quad(0,0,0,0,12), \quad (0,0,12),\quad (0,0,0,0,12+34),\\
(0,0,0,12,13), \quad (0,0,12,13,14), \quad(0,0,12,13)
\end{gather*}
do not admit Einstein metrics of nonzero scalar curvature.
\end{proposition}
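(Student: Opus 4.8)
The proof is an inspection of the eight algebras one at a time: for each I would read off from the structure equations the lower central series $\g\supset\g^1\supset\cdots$ and the centre $Z$, and then apply whichever of the obstructions already proved is available. The algebras fall into three groups according to how $Z$ sits inside $\g^1$.

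The first group, those with $Z\not\subset\g^1$, consists of $(0,0,0,12)$, $(0,0,0,0,12)$ and $(0,0,0,12,14)$, which are respectively $\mathfrak{h}_3\oplus\R$, $\mathfrak{h}_3\oplus\R^2$ and $L_4\oplus\R$, where $L_4=(0,0,12,13)$ is the four-dimensional filiform algebra. For these Lemma~\ref{lemma:centroinderivato}, or equivalently the remark following it on algebras of the form $\g\oplus\R^n$, excludes an Einstein metric with $s\neq0$. The second group, those with $\g^1=Z$, consists of $(0,0,12)$, $(0,0,0,0,12+34)$ and $(0,0,0,12,13)$; these are two-step nilpotent with centre equal to the derived algebra, so Corollary~\ref{cor:steptwo} applies. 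This leaves $L_4=(0,0,12,13)$ and $L_5=(0,0,12,13,14)$, the four- and five-dimensional filiform algebras, for which $Z\subsetneq\g^1$ and neither earlier argument applies; here one invokes Proposition~\ref{prop:MandN}.

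For these two cases I would compute $\ad(\g)\subset Z^o\otimes\g^1$ and $d(\g^*)\subset\Lambda^2 Z^o$ explicitly. Since $\g^1=[e_1,\g]$ and every bracket not involving $e_1$ vanishes, $d(\g^*)$ has the special form $e^1\wedge W$ for a subspace $W\subset\g^*$, and $\ad(\g)$ lies inside a correspondingly thin part of $Z^o\otimes\g^1$. From this one determines the null spaces $M$ of $\ad(\g)$ and $N$ of $d(\g^*)$, and checks the inequality $\dim M+\dim N\ge\dim\g^1-\dim Z$, which reads $1$ for $L_4$ and $2$ for $L_5$.

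I expect this third group to be the main obstacle. The pairings induced on $\ad(\g)$ and on $d(\g^*)$ are nondegenerate for a generic choice of metric (any Riemannian one, for instance), so the inequality of Proposition~\ref{prop:MandN} does not hold for every metric, and the filiform cases have to be organised around the loci where these forms degenerate: when $e^1$ is a null covector the form on $e^1\wedge W$ drops rank enough that $N$ becomes large, while in the complementary case one is forced into a direct analysis of the Einstein equation $B_5-B_3=2\lambda g$ — using the automorphism group of the filiform algebra to bring the metric to a short normal form — in order to rule out a solution with $\lambda\neq0$. The first two groups are immediate by comparison.
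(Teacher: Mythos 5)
Your treatment of the first six algebras matches the paper's: the reducible ones $(0,0,0,12)$, $(0,0,0,0,12)$, $(0,0,0,12,14)$ are excluded by Lemma~\ref{lemma:centroinderivato}, and the step-two ones $(0,0,12)$, $(0,0,0,0,12+34)$, $(0,0,0,12,13)$ by Corollary~\ref{cor:steptwo}. For the two filiform algebras $(0,0,12,13)$ and $(0,0,12,13,14)$ you also correctly isolate the dichotomy on whether $e^1$ is null, and your claim that the null case is settled by Proposition~\ref{prop:MandN} is right (the induced form on $e^1\wedge W$ then has rank at most one, so $\dim N$ is large enough); this is exactly what the paper does.

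The gap is the complementary case, where $e^1$ is non-null. You correctly flag it as the crux, but ``a direct analysis of the Einstein equation, using the automorphism group to bring the metric to a short normal form'' is a programme, not a proof: nothing in your write-up actually rules out $\lambda\neq0$ there. Moreover no normal-form computation is needed. Since $e^1$ is closed and every bracket involves $e_1$, adding suitable multiples of $e^1$ to $e^2,\dotsc,e^n$ preserves the structure equations and produces an orthogonal splitting $\g=U\oplus C$ with $U=\Span{e_1}$. Then $e_1^\flat$ is proportional to $e^1$, so $de_1^\flat=0$ and $\tilde B_5$ vanishes on $U$; on the other hand Lemma~\ref{lemma:trace} gives $\Tr(\pi_U\circ\tilde B_3|_U)=\llangle e^1\otimes\ad e_1,\ad\rrangle=\tfrac12\llangle\ad,\ad\rrangle=\llangle d,d\rrangle$, because all structure constants carry the index $1$. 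Hence an Einstein metric would satisfy $\lambda=\Tr(\pi_U\circ\ric|_U)=-\tfrac12\llangle d,d\rrangle$, while \eqref{eqn:traceofricci} gives $n\lambda=\Tr\ric=-\tfrac12\llangle d,d\rrangle$; as $n>1$ this forces $\lambda=0$. You should either insert an argument of this kind or actually carry out the case analysis you allude to; as written, the two filiform cases remain unproved.
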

\begin{proof}
The statement follows from Lemma~\ref{lemma:centroinderivato} for Lie algebras of the form $\g\oplus\R$, and from Corollary~\ref{cor:steptwo} for those of step two. For the last two Lie algebras in the list, we resort to a different argument. Consider the Lie algebra $(0,0,12,13,14)$. If $e^1$ is light-like, we can use Proposition~\ref{prop:MandN} and conclude. Thus, up to adding the closed form $e^1$ to $e^2,\dots, e^5$ we may assume we have an orthogonal splitting
\[U\oplus C, \quad U=\Span{e_1}, C=\Span{e_2,\dots, e_5}.\]
By Lemma~\ref{lemma:trace},
\[\Tr\tilde B_3|_U = \llangle e^1\otimes \ad e_1, e^1\otimes \ad e_1\rrangle = \llangle d,d\rrangle,\]
where the last equality depends on the structure of the Lie algebra. Thus an Einstein metric necessarily has 
\[\ric=-\llangle d,d \rrangle \id; \]
this contradicts \eqref{eqn:traceofricci}. The same argument applies to $(0,0,12,13)$. 
\end{proof}
This result shows that Einstein metrics are Ricci-flat on all nilpotent Lie algebras of dimension $4$, and on six of the nine nilpotent Lie algebras of dimension $5$, including the abelian case; the remaining cases will be covered in Section~\ref{sec:sixdim}.

\section{The Ricci tensor as a moment map}
\label{sec:momentmap}
In this section we change our point of view; rather than fixing an orthonormal frame, we allow both metric and structure constants to vary simultaneously. Accordingly, the relevant group of symmetries is now $\GL(n,\R)$. Our model for the set of Lie algebras with a fixed metric is a subset in a representation of $\GL(n,\R)$; under suitable assumptions, we will see that the Ricci tensor has a natural symplectic interpretation.

Motivated by Proposition~\ref{prop:ricci_in_peace}, we will consider the class of unimodular Lie algebras on which the Killing form is zero. By Cartan's criterion, this class sits in between nilpotent Lie algebras and solvable Lie algebras. Both inclusions are strict; in particular, we point out that the Killing form of a  unimodular solvable Lie algebra can be nonzero; one example is the Lie algebra $(0,e^{12},-e^{13})$, or $(0,12,-13)$. This notation means that relative to some coframe $\{e^1,e^2,e^3\}$ the exterior derivative reads
\[de^1=0, \quad de^2=e^{12}, \quad de^3=-e^{13},\]
where $e^{12}$ is short-hand notation for $e^1\wedge e^2$.

In this setting, we can restate the Ricci formula as follows. Fix $T=\R^n$ as a $\GL(n,\R)$-module. We have a decomposition
\[\Lambda^2T^*\otimes T \cong T^*\oplus V^{11}_1,\]
where $V^{11}_1$ is an irreducible $\GL(n,\R)$-module with highest weight $L_1-L_{n-1}-L_n$. 

A Lie algebra structure on $T$ is a linear map $\ad\colon T\to\End(T)$; more precisely, it is defined by an element
\[a\in T^*\oplus V^{11}_1=\ker (T\otimes \End T\to S^2T^*\otimes T),\]
which in addition satisfies the Jacobi identity. We shall denote the corresponding Lie algebra by $T_a$; thus, $T_a=T$ as vector spaces. The unimodular condition then reads $a\in V^{11}_1$; our class of Lie algebras therefore corresponds to the variety
\[\mathcal{P}=\{a\in V^{11}_1\mid \text{  $a$ defines a Lie algebra with Killing form zero}\}.\]

The choice of a metric on $T_a$ determines an isomorphism $\flat\colon T\cong T^*$, and thus a Lie algebra structure on $T^*$. This gives rise to a pair
\[(a,b)\in V^{11}_1\times V^1_{11}, \quad V^1_{11}\subset  T\otimes\End T,\]
where $\End(T^*)$ is identified with $\End(T)$ via transposition. Two pairs in the same $\GL(n,\R)$-orbit correspond to the same Lie algebra and metric written with respect to different frames.

Define the contractions
\begin{gather*}
c_1,c_2\colon (T^*\otimes\End T)\otimes (T\otimes\End T)\to \End T,\\
c_1(e^i\otimes a_i, e_j\otimes b_j)=a_i\circ b_i, \quad c_2(e^i\otimes a_i, e_j\otimes b_j)=b_i\circ a_i.
 \end{gather*}
Since elements of $V^{11}_1$, $V^1_{11}$ are skew-symmetric in two indices, $c_2$ can also be written as
\begin{equation}
 \label{eqn:c2new}
\Tr(a_i\circ b_j)e^i\otimes e_j.
\end{equation}
The natural duality $\langle\, ,\rangle \colon V^{11}_1\times V^1_{11}\to\R$ can be expressed as
\begin{equation}
 \label{eqn:tracec1c2}
 \langle a,b\rangle = \Tr c_1(a,b)=\Tr c_2(a,b).
\end{equation}

\begin{proposition}
\label{prop:c1c2}
Given $a\in\PP$, a metric on $T_a$ and $(a,b)$ induced as above, the Ricci operator $\ric\in \gl(T)$ is given by
\[\ric=\frac14c_1(a,b)-\frac12c_2(a,b).\]
\end{proposition}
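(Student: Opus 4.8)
The plan is to reduce Proposition~\ref{prop:c1c2} to Lemma~\ref{lemma:ricci_in_general} by carefully dictionary-translating each of the four terms appearing there into the contractions $c_1$ and $c_2$. Once a metric is fixed on $T_a$ we pick an orthonormal basis $\{e_i\}$ with $\llangle e_i,e_i\rrangle = \epsilon_i$; then $a = e^i\otimes \ad(e_i)$ with $\ad(e_i)\in\End T$, while $b$ is obtained by transporting the bracket on $T^*$ to $\End(T)$ via $\flat$. Concretely $b = e_j\otimes b_j$ where $b_j$ encodes the derivative $d$: one has $\llangle b_j(e_k),e_l\rrangle$ expressed through the structure constants $c_{ijk}$ in the same way $dv^\flat$ is, so that the two ``halves'' of $\Lambda^2T^*\otimes T$ seen from the $\g$ side and from the $\g^*$ side get swapped. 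I would write out $a_i = \ad(e_i)$ and $b_j$ explicitly in terms of $c_{ijk}$ and $\epsilon_i$ at the very start, since everything else is bookkeeping.

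Next I would compute the two contractions componentwise. Using \eqref{eqn:c2new}, $c_2(a,b) = \Tr(\ad(e_i)\circ b_j)\,e^i\otimes e_j$, and expanding $\Tr(\ad(e_i)\circ b_j)$ in structure constants should produce a combination of the bilinear forms $B_1,\dots,B_6$ from Section~1 — I expect essentially $B_1$ together with $B_6$-type terms, i.e. the ``mixed'' $\ad$-versus-$d$ contractions. Similarly $c_1(a,b) = \ad(e_i)\circ b_i$ summed over $i$; tracing against $e^j\otimes e_h$ gives another structure-constant expression, which should reduce to a combination involving $B_3$, $B_4$ and $B_5$ (the ``pure'' $\ad\circ\ad$ and $d\circ d$ pieces). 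The key identities I will lean on are the unimodularity $\Tr\ad(v)=0$ (which kills $B_2$ and simplifies $B_1$, cf. \eqref{eqn:Killingsymmetric}) and the Killing-form-zero hypothesis $B_4=0$, so that Lemma~\ref{lemma:ricci_in_general} already collapses to the form in Proposition~\ref{prop:ricci_in_peace}, $\Ric = \tfrac12\llangle d,d\rrangle - \tfrac12\llangle\ad,\ad\rrangle$, i.e. $\tfrac12 B_5 - \tfrac12 B_3$ after accounting for the factor in $B_5$.

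The crux is then a linear-algebra identity: I must show that $\tfrac14 c_1(a,b) - \tfrac12 c_2(a,b)$, re-expressed in structure constants, equals $\tfrac12\tilde B_5 - \tfrac12\tilde B_3$ as operators on $T$ (equivalently, matching the bilinear form $\Ric$ after lowering an index with the metric). I would verify this by comparing coefficients of each monomial $c_{\cdot\cdot\cdot}c_{\cdot\cdot\cdot}\epsilon_\cdot\epsilon_\cdot$ on both sides, using the skew-symmetry $c_{ijk}=-c_{ikj}$ built into $V^{11}_1$ and the duality \eqref{eqn:tracec1c2} to cross-check traces. A useful sanity check along the way: taking $\Tr$ of both sides of the claimed formula must recover \eqref{eqn:traceofricci}, since $\Tr c_1 = \Tr c_2 = \langle a,b\rangle$ by \eqref{eqn:tracec1c2}, giving $\Tr\ric = (\tfrac14-\tfrac12)\langle a,b\rangle = -\tfrac14\langle a,b\rangle$, which should agree with $-\tfrac12\llangle d,d\rrangle$; confirming this fixes the normalization of the identification $b\leftrightarrow d$.

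The main obstacle I anticipate is purely organizational rather than conceptual: keeping track of the index placement and the $\epsilon_i$ factors when transporting the $\g^*$ bracket back to $\End(T)$ via $\flat$, and making sure the identification $\End(T^*)\cong\End(T)$ by transposition introduces exactly the sign/ordering that turns ``$b_i\circ a_i$'' into the $d$-side contraction rather than a second copy of the $\ad$-side one. Once the explicit formulas for $a_i$ and $b_j$ are pinned down correctly, the remainder is a finite coefficient comparison that the unimodular and Killing-zero hypotheses make clean; I would present it as a short computation citing Lemma~\ref{lemma:ricci_in_general} and equations \eqref{eqn:traceB3B5}--\eqref{eqn:traceofricci} for the consistency check.
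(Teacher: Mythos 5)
Your plan is essentially the paper's proof: starting from Proposition~\ref{prop:ricci_in_peace} (which is exactly Lemma~\ref{lemma:ricci_in_general} with $B_1=0$ by unimodularity and $B_4=0$ by the Killing-form-zero hypothesis), one writes $a$ and $b=q(a,S)$ in components and matches the two surviving terms of the Ricci formula with the entries of $c_1(a,b)$ and $c_2(a,b)$ by a direct index computation, which is precisely what you propose; your trace sanity check also comes out right. One calibration before the bookkeeping: your expectation that $c_2$ yields ``mixed'' $B_1$/$B_6$-type terms while $c_1$ mixes $B_3$, $B_4$, $B_5$ is off --- the computation gives cleanly $c_1(a,b)=2\tilde B_5$ (the $d$-against-$d$ contraction) and $c_2(a,b)=\tilde B_3$ (the $\ad$-against-$\ad$ contraction), consistent with $\Tr c_1=\Tr c_2=\langle a,b\rangle=\llangle \ad,\ad\rrangle=2\llangle d,d\rrangle$ from \eqref{eqn:tracec1c2} and \eqref{eqn:traceB3B5}, so that $\frac14 c_1-\frac12 c_2=\frac12\tilde B_5-\frac12\tilde B_3=\ric$ falls out term by term with no cross-cancellation needed.
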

\begin{proof}
For an endomorphism $u\colon \g\to \g$, define
\[u^\natural\colon \g^*\to \g^*, \quad \sharp \circ u^\natural\circ \flat=u;\]
then
\[\llangle  u_1,u_2\rrangle= \langle u_1, \tran u_2^\natural\rangle = \Tr(u_1\circ \tran u_2^\natural), \quad u_1,u_2\in \End(\g).\]
In particular
\begin{equation}
\label{eqn:musica}
\tran b(e^i)=a((e^i)^\sharp)^\natural.
\end{equation}
Write 
$a(e_h)=a_{hj}^i e^j\otimes e_i$, $b(e^k)=b^{kj}_l e^l\otimes e_j$, so that 
\[de^k=-\frac12 a_{ij}^k e^{ij}, \quad de_h^\flat = -\frac12 b^{jk}_h e_j^\flat \wedge e_k^\flat;\]
Proposition~\ref{prop:ricci_in_peace} then reads
\[2\llangle \ric (e_h),e_k\rrangle =
\llangle de_h^\flat, de_k^\flat\rrangle - \llangle \ad e_h, \ad e_k\rrangle;\]
replacing $e_k$ with $(e^k)^\sharp$,
\begin{multline*}
2e^k(\ric(e_h))=\llangle de_h^\flat, de^k\rrangle -  \langle \ad e_h, \tran\ad((e^k)^\sharp)^\natural\rangle\\
=
\frac14\llangle b^{jm}_h  e_j^\flat \wedge e_m^\flat, a_{il}^k e^{il}\rrangle
 -  \langle a_{hj}^i e^j\otimes e_i, b^{kj}_l e^l\otimes e_j\rangle =\frac12 a_{il}^k  b^{il}_h -a^i_{hj}b_i^{kj}.
\end{multline*}
The statement now follows from
\begin{align*}
a(e_i) \circ  b(e^i) &= e^h\otimes a(e_i) b^{ij}_h e_j = a_{ij}^k b^{ij}_h e^h\otimes  e_k,  \\
 b(e^i)\circ a( e_i) &= e^h\otimes b(e^i) a_{ih}^m  e_m = a_{ih}^m  b^{ik}_m e^h\otimes  e_k.\qedhere
 \end{align*}
\end{proof}
It will be convenient to make the dependence of the pair $(a,b)$ on the Lie algebra structure and  pseudoriemannian metric explicit. Let $\mathcal{S}\subset S^2T^*$ be the set of non-degenerate scalar products on $T$; there is a natural action of $\GL(T)$ on $\mathcal{S}$, with a finite number of orbits, one for each possible signature. Elements of $\mathcal{S}$ can be identified with isomorphisms
\[S\colon T\to T^*, \quad \tran S=S\]
by mapping a scalar product to its associated musical isomorphism.

We define the action of $g\in\GL(T)$ on $\mathcal{S}$ by the commutativity of the diagram
\[\xymatrix{
T\ar[d]^{g}\ar[r]^S & T^*  \ar[d]^{\tran{g}^{-1}}\\
T\ar[r]^{gS}& T^*
}\]
i.e.  $gS=\tran{g}^{-1}Sg^{-1}$. The correspondence~\eqref{eqn:musica} now reads
\[\tran b(Se_i) = S a(e_i)S^{-1},\]
namely $b=q(a,S)$, where
\[q\colon V^{11}_1\times \mathcal{S}\to V^1_{11}, \quad q(e^i\otimes a_i,S)=S^{-1}e^i\otimes S^{-1}\tran a_i S.\]
Notice that $q$ is linear in the first variable.
\begin{lemma}
\label{lemma:dq}
The map $q$ is equivariant and satisfies
\[dq_{a, S}(a',g'S)= q(a'-g'a,S)+g'q(a,S), \quad g'\in\gl(T).\]
\end{lemma}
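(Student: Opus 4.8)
The plan is to deduce the differential formula from the equivariance of $q$, so the first and main task is to establish that equivariance. I would argue directly in the frame $\{e_i\}$. Recall that $\GL(T)$ acts on $V^{11}_1\subset T^*\otimes\End T$ by $g\cdot(e^i\otimes a_i)=\tran g^{-1}e^i\otimes ga_ig^{-1}$, on $V^1_{11}\subset T\otimes\End T$ by $g\cdot(e_i\otimes b_i)=ge_i\otimes gb_ig^{-1}$, and on $\mathcal S$ by $g\cdot S=\tran g^{-1}Sg^{-1}$, so that $(g\cdot S)^{-1}=gS^{-1}\tran g$. For $a=e^i\otimes a_i$ a short manipulation then gives, with $S'=g\cdot S$,
\[(S')^{-1}(\tran g^{-1}e^i)=gS^{-1}e^i,\qquad (S')^{-1}\,\tran{(ga_ig^{-1})}\,S'=g\bigl(S^{-1}\,\tran{a_i}\,S\bigr)g^{-1},\]
and summing over $i$ yields precisely $q(g\cdot a,g\cdot S)=g\cdot q(a,S)$. (One should also check that the defining formula for $q$ is independent of the frame: this is immediate once one observes that $\sum_i e^i\otimes a_i\mapsto\sum_i e^i\otimes\tran{a_i}$ is the frame-free map $\mathrm{id}_{T^*}\otimes(\text{transposition})$ and that the remaining operation is $S^{-1}\otimes(u\mapsto S^{-1}uS)$.)

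Next I would record two elementary remarks about the differential $dq_{a,S}$. First, $q$ is linear in its first argument, so $dq_{a,S}(a',0)=q(a',S)$. Second, since $\mathcal S$ is open in $S^2T^*$ and $\GL(T)$ acts on it with open orbits, every tangent vector to $\mathcal S$ at $S$ is the value at $S$ of the fundamental vector field of some $g'\in\gl(T)$, which is exactly the tangent vector denoted $g'S$ in the statement; similarly $g'a$ and $g'q(a,S)$ denote the infinitesimal $\GL(T)$-actions on $V^{11}_1$ and $V^1_{11}$.

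The last step is to differentiate the equivariance identity along a one-parameter subgroup. Taking $g_t=\exp(tg')$ and applying $\frac{d}{dt}\big|_{t=0}$ to $q(g_t\cdot a,g_t\cdot S)=g_t\cdot q(a,S)$ gives $dq_{a,S}(g'a,g'S)=g'q(a,S)$. Combining this with the linearity of $dq_{a,S}$ and the first remark,
\[dq_{a,S}(a',g'S)=dq_{a,S}(a'-g'a,0)+dq_{a,S}(g'a,g'S)=q(a'-g'a,S)+g'q(a,S),\]
which is the assertion. I expect the equivariance computation — with its several occurrences of $g$, $g^{-1}$, $\tran g$, $\tran g^{-1}$ and the transposition built into $q$ — to be the only point requiring real care; once it is in place the differential formula is purely formal.
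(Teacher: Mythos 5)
Your proof is correct and follows essentially the same route as the paper's: a direct frame computation using $(gS)^{-1}=gS^{-1}\tran{g}$ to verify equivariance, followed by splitting $dq_{a,S}(a',g'S)=dq_{a,S}(a'-g'a,0)+dq_{a,S}(g'a,g'S)$ and using linearity in the first slot together with the differentiated equivariance identity. The extra remarks (frame-independence of the formula for $q$, and surjectivity of $g'\mapsto g'S$ onto $T_S\mathcal{S}$) are sound and only make explicit what the paper leaves implicit.
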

\begin{proof}
Equivariance follows from
\begin{gather*}
 q(\tran{g}^{-1}e^i\otimes ga_ig^{-1},gS)=gS^{-1}\tran{g}\tran{g}^{-1}e^i\otimes gS^{-1}\tran{g}  \tran g^{-1} \tran{a_i}\tran{g}\tran{g}^{-1}Sg^{-1},\\
g q(e^i\otimes a_i,S)=gS^{-1}e^i\otimes gS^{-1}\tran a_i Sg^{-1}.
\end{gather*}
Using equivariance and linearity in the first variable, we compute
\[dq_{a, S}(a',g'S)=
dq_{a, S}(g'a,g'S)+dq_{a, S}(a'-g'a,0) = g'q(a,S)+q(a'-g'a,S).\qedhere\]
\end{proof}
We will also need the following observation:
\begin{lemma}
\label{lemma:symmetry}
Given $a',a''\in V^{11}_1$ and $S\in\mathcal{S}$, 
\[\langle a',q(a'',S)\rangle =\langle a'',q(a',S)\rangle .\]
\end{lemma}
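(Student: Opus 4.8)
The statement asserts that, for fixed $S$, the bilinear form $(a',a'')\mapsto\langle a',q(a'',S)\rangle$ on $V^{11}_1$ is symmetric, and I would establish this by a direct computation using the trace description \eqref{eqn:tracec1c2} of the natural pairing.

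First I would expand everything in a basis. Write $a'=e^i\otimes a'_i$ and $a''=e^i\otimes a''_i$ with $a'_i,a''_i\in\End T$, and let $S^{ij}$ be the matrix of $S^{-1}\colon T^*\to T$; the hypothesis $\tran S=S$ gives $\tran{(S^{-1})}=S^{-1}$, equivalently $S^{ij}=S^{ji}$. Unwinding the definition of $q$, the summand $S^{-1}e^k=S^{kj}e_j$ of $q(a'',S)$ has its $T$-component contracted by $c_1$ against the covector $e^i$ of $a'$, so that
\[
\langle a',q(a'',S)\rangle=\Tr c_1(a',q(a'',S))=S^{ki}\,\Tr\bigl(a'_i\circ S^{-1}\,\tran{a''_k}\,S\bigr).
\]
Note that the skew-symmetry of elements of $V^{11}_1$ plays no role, so the identity in fact holds for arbitrary elements of $T^*\otimes\End T$.

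The crux is a transpose manipulation inside the trace: combining cyclicity of the trace with $\Tr(PQ)=\Tr(\tran Q\,\tran P)$ and $\tran S=S$, one checks that
\[
\Tr\bigl(a'_i\circ S^{-1}\,\tran{a''_k}\,S\bigr)=\Tr\bigl(a''_k\circ S^{-1}\,\tran{a'_i}\,S\bigr).
\]
Substituting this back, relabelling the two summation indices and using $S^{ki}=S^{ik}$, the expression for $\langle a',q(a'',S)\rangle$ is carried exactly to the one obtained by interchanging $a'$ and $a''$, namely $\langle a'',q(a',S)\rangle$.

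I do not expect a genuine obstacle here; the only thing to watch is the bookkeeping of which space ($T$ or $T^*$) each operator acts on as $S$, $S^{-1}$ and the various transposes are moved past one another, and making sure $c_1$ contracts the intended tensor slots. As a cross-check, and an alternative argument, one may instead use the $\GL(T)$-equivariance of $q$ (Lemma~\ref{lemma:dq}) together with the invariance of the natural pairing to reduce to the case where $S$ is a standard diagonal scalar product $\sum_i\epsilon_ie^i\otimes e^i$ with $\epsilon_i=\pm1$. There $S$ and $S^{-1}$ are represented by the same diagonal matrix and the first displayed formula collapses to $\sum_i\epsilon_i\Tr(S\,a'_i\,S\,\tran{a''_i})$, which is visibly symmetric in $a'$ and $a''$.
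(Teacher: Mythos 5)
Your argument is correct and is essentially the paper's own proof: both expand the pairing in a basis via \eqref{eqn:tracec1c2}, use $\Tr(PQ)=\Tr(\tran{Q}\,\tran{P})$ together with cyclicity and $\tran{S}=S$ to swap $a'_i$ and $a''_k$ inside the trace, and then relabel indices using the symmetry of $S^{-1}$. The extra reduction to a diagonal $S$ is a fine cross-check but adds nothing beyond the direct computation.
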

\begin{proof}
From $\tran{S}=S$, we deduce
\begin{multline*}
\langle a',q(a'',S)\rangle =\langle e^j\otimes a'_j, S^{-1}(e^i)\otimes S^{-1}\tran a''_i S  \rangle\\
= e^j(S^{-1}(e^i)) \Tr a'_j S^{-1}\tran a''_i S
= e^j(S^{-1}(e^i)) \Tr S a''_i S^{-1}\tran a'_j\\
= e^j(S^{-1}(e^i)) \Tr  a''_i S^{-1}\tran a'_j S
= \langle e^i\otimes a''_i,S^{-1}(e^j)\otimes S^{-1}\tran a'_j S \rangle\\
=\langle a'',q(a',S)\rangle.\qedhere
\end{multline*}
\end{proof}

We can restate Proposition~\ref{prop:c1c2} as follows:
\begin{corollary}
\label{cor:riccic1c2}
Let $S\in\mathcal{S}$ define a metric on $T_a$, with $a\in\PP$; its Ricci operator satisfies
\[\ric=\frac14c_1(a,q(a,S))-\frac12c_2(a,q(a,S)).\]
\end{corollary}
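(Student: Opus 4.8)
The plan is to observe that this is a direct translation of Proposition~\ref{prop:c1c2} once the second entry of the pair is made explicit in terms of the musical isomorphism. Recall that Proposition~\ref{prop:c1c2} asserts $\ric=\frac14c_1(a,b)-\frac12c_2(a,b)$, where $(a,b)\in V^{11}_1\times V^1_{11}$ is the pair induced by the Lie algebra structure $a$ together with the chosen metric; the statement to prove merely substitutes a formula for $b$.

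First I would unwind the definition of the pair $(a,b)$: the metric on $T_a$ corresponds, via its associated musical isomorphism, to an element $S\in\mathcal{S}$ with $\tran S=S$, and the Lie bracket of $T_a$ transported to $T^*$ through $\flat=S$ is exactly the endomorphism-valued element $b\in V^1_{11}$ appearing in Proposition~\ref{prop:c1c2}. Writing out what "transported through $S$" means, the relation \eqref{eqn:musica} becomes $\tran b(Se_i)=Sa(e_i)S^{-1}$; comparing this with the definition of the map
\[q\colon V^{11}_1\times\mathcal{S}\to V^1_{11},\qquad q(e^i\otimes a_i,S)=S^{-1}e^i\otimes S^{-1}\tran a_i S,\]
one reads off that $b=q(a,S)$. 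This identification was already recorded in the text immediately preceding Lemma~\ref{lemma:dq}, so in the write-up it suffices to invoke it. Substituting $b=q(a,S)$ into the formula of Proposition~\ref{prop:c1c2} gives the claimed expression
\[\ric=\frac14c_1(a,q(a,S))-\frac12c_2(a,q(a,S)).\]

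There is no real obstacle here: the corollary is a restatement, and the only thing to check carefully is the bookkeeping that the $b$ of Proposition~\ref{prop:c1c2} coincides with $q(a,S)$, i.e. that raising and lowering indices with $S$ matches the contraction conventions built into $q$. The proof is therefore a single line of substitution, and I would present it as such.
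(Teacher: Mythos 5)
Your proposal is correct and matches the paper exactly: Corollary~\ref{cor:riccic1c2} is presented there as a restatement of Proposition~\ref{prop:c1c2}, using precisely the identification $b=q(a,S)$ recorded via~\eqref{eqn:musica} in the text before Lemma~\ref{lemma:dq}. The one-line substitution you describe is all that is needed.
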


On $V^{11}_1\times V^1_{11}$ we have a natural symplectic form given by
\[\omega_{a,b}((v,w),(v',w'))=\langle v,w'\rangle-\langle v',w\rangle,\]
where closedness of $\omega$ follows from the fact that is has constant coefficients. Up to a factor, the Ricci operator corresponds to an invariant bilinear map
\[\mu\colon V^{11}_1\times V^1_{11}\to\gl(T), \quad (a,b)\mapsto c_1(a,b)-2c_2(a,b).\]

\begin{proposition}
\label{prop:moment}
$\mu$ is the moment map for the action of $\GL(T)$ on $V^{11}_1\times V^1_{11}$ and it satisfies 
\[\langle \mu(a,b),X\rangle = \langle Xa,b\rangle, \quad a\in V^{11}_1, b\in V^1_{11}, X\in\gl(T).\]
\end{proposition}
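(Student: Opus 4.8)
The plan is to verify directly that $\mu$ satisfies the defining identity of a moment map, namely that for every $X \in \gl(T)$, the function $\langle \mu, X\rangle$ on $V^{11}_1\times V^1_{11}$ is a Hamiltonian for the fundamental vector field generated by $X$. Since the symplectic form $\omega$ has constant coefficients and the space is a vector space, the fundamental vector field at $(a,b)$ in the direction $X$ is simply $(Xa, -\tran{X}b)$ — here I use that $\GL(T)$ acts on $V^{11}_1 \subset \Lambda^2T^*\otimes T$ in the natural tensorial way and on $V^1_{11}$ by the dual action, which under the transposition identification of $\End(T^*)$ with $\End(T)$ becomes $b \mapsto -\tran{X}b$ infinitesimally. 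The moment map condition then reads $d\langle \mu, X\rangle_{(a,b)}(v,w) = \omega_{a,b}((Xa,-\tran{X}b),(v,w)) = \langle Xa, w\rangle + \langle \tran{X}b, v\rangle$ for all $(v,w)$.

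First I would establish the scalar identity $\langle \mu(a,b), X\rangle = \langle Xa, b\rangle$, which the proposition also asserts and which is the cleaner formula to differentiate. By \eqref{eqn:tracec1c2}, $\langle \mu(a,b), X\rangle = \Tr\big((c_1(a,b) - 2c_2(a,b))\,\tran{X}\big)$ — wait, more carefully, $\langle Y, X\rangle$ for $Y\in\gl(T)$ should be interpreted via the pairing of $\gl(T)$ with itself, so $\langle \mu(a,b),X\rangle = \Tr(\mu(a,b)\circ \tran{X})$ or $\Tr(\mu(a,b)\circ X)$ depending on the convention; I would pin down the convention from \eqref{eqn:tracec1c2} and then write everything in index notation. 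With $a(e_h) = a^i_{hj}\,e^j\otimes e_i$ and $b(e^k) = b^{kj}_l\,e^l\otimes e_j$, one has $c_1(a,b) = a^k_{ij}b^{ij}_h\,e^h\otimes e_k$ and $c_2(a,b) = a^m_{ih}b^{ik}_m\,e^h\otimes e_k$ (these are exactly the expressions appearing in the proof of Proposition~\ref{prop:c1c2}), and $(Xa)(e_h)$ expands into three terms coming from the action of $X$ on the two covariant and one contravariant slot. Contracting $Xa$ against $b$ and contracting $\mu(a,b)$ against $X$ should produce matching sums; the skew-symmetry of $a$ in its two lower indices and of $b$ in its two upper indices is what makes the combinatorics of the three terms collapse to the right coefficients $1$ and $-2$.

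Once the scalar identity $\langle \mu(a,b),X\rangle = \langle Xa,b\rangle$ holds, the moment map property is immediate: the right-hand side is bilinear in $(a,b)$, so its differential at $(a,b)$ in the direction $(v,w)$ is $\langle Xv, b\rangle + \langle Xa, w\rangle$, and I must check this equals $\langle Xa, w\rangle + \langle \tran{X}b, v\rangle$, i.e. that $\langle Xv, b\rangle = \langle \tran{X}b, v\rangle$. This last is just the compatibility of the natural pairing $V^{11}_1\times V^1_{11}\to\R$ with the $\GL(T)$-action — it is the infinitesimal form of $\GL(T)$-invariance of the pairing, which holds because the pairing is the restriction of the canonical pairing between $\Lambda^2T^*\otimes T$ and its dual.

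The main obstacle is the bookkeeping in the second step: keeping straight the transposition identification $\End(T^*)\cong\End(T)$, the resulting sign in the infinitesimal action on $V^1_{11}$, and the placement of indices so that the three terms of $Xa$ reorganize correctly using skew-symmetry. Everything else is formal. One could alternatively avoid the index computation for the scalar identity by invoking the known description of the Ricci tensor as a real-GIT moment map (referenced in the introduction via \cite{Lauret}) and matching normalizations, but the direct verification via the formulas already derived in Proposition~\ref{prop:c1c2} is self-contained and I would prefer it.
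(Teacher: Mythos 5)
Your proposal is correct and follows essentially the same route as the paper: the heart in both cases is the index verification that $\langle Xa,b\rangle=\langle c_1(a,b)-2c_2(a,b),X\rangle$ via the decomposition $Xa=Xe^i\otimes a_i+e^i\otimes[X,a_i]$ and the skew-symmetry encoded in \eqref{eqn:c2new}, and that computation does close up exactly as you predict. The only (harmless) difference is organizational: you derive the moment-map identity from the scalar formula by bilinearity plus invariance of the pairing, whereas the paper verifies the two components of $d\mu$ separately and reads off the scalar formula at the end.
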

\begin{proof}
We compute
\[d\mu_{a,b}(v,w)=c_1(a,w)-2c_2(a,w)+c_1(v,b)-2c_2(v,b).\]
The moment map condition reads
\[\langle d\mu_{a,b}(v,w),X\rangle = \omega_{a,b}((Xa,Xb),(v,w)),\]
or equivalently
\begin{align*}
\langle c_1(a,w), X\rangle-2\langle c_2(a,w), X\rangle &=\langle Xa,w\rangle, \\
\langle c_1(v,b), X\rangle-2\langle c_2(v,b),X\rangle &=-\langle v,Xb\rangle.
\end{align*}
Writing \[Xa=Xe^i \otimes a_i + e^i\otimes [X,a_i], \]
observing that $(Xe^i)(e_j)=-\langle X, e^i\otimes e_j\rangle$ and using \eqref{eqn:c2new}, we compute
\begin{align*}
\langle Xa,w\rangle&=-\langle c_2(a,w),X\rangle + \Tr ([X,a_i]w_i)
=-2\langle c_2(a,w),X\rangle + \langle c_1(a,w),X\rangle, \\
\langle v,Xb\rangle&=\langle c_2(v,b),X\rangle + \Tr (v_i[X,b_i])
=2\langle c_2(v,b),X\rangle -\langle c_1(v,b),X\rangle.
\end{align*}
From the definition of $\mu$ it follows that $\langle Xa,w\rangle$ equals $\langle \mu(a,w),X\rangle$.
\end{proof}
\begin{remark}
The action of $\GL(T)$ on $V^{11}_1\times V_{11}^1$ is not free. In fact, suppose $b=q(a,S)$; then $g\in\GL(T)$ fixes both $a$ and $b$ if and only 
\[[ge_i,ge_j]=g[e_i,e_j], \quad g[Se_i,Se_j]=[g(Se_i),g(Se_j)],\]
meaning that both $g$ and $S^{-1}\circ\tran{g}^{-1}\circ S$ must be automorphisms of $T_a$, i.e.
\[g\in \Aut(T_a)\cap (S^{-1}\tran(\Aut T_a)S).\]
It is easy to see that for fixed $T_a$, a suitable choice of $S$ makes this set not empty.
\end{remark}
\begin{remark}
For a fixed element $\lambda\id$ of the center of $\gl(n,\R)$, the space $\mu^{-1}(\lambda\id)/\GL(n,\R)$ has a natural interpretation as a symplectic reduction.  Einstein Lie algebras in $\PP$ form a subset of this reduction; it is plausible that this symplectic structure may give useful information on the moduli space of Einstein Lie algebras in $\PP$.

We emphasize that the group of symmetries $\GL(n,\R)$ is noncompact. There is a theory of moment maps for actions of noncompact Lie groups, see e.g. \cite{Heinzner:TheMinimumPrinciple}; in order to apply it, one would need to extend the action of the group $\GL(n,\R)$ to a holomorphic action of $\GL(n,\C)$ on a K\"ahler manifold. The only natural way of doing this is by complexifying $V^{11}_1\times V^1_{11}$; the K\"ahler form would have to be $\GL(n,\R)$-invariant element of
\[\Lambda^2((V^{11}_1+iV^{11}_1)\times (V^1_{11} + iV_{11}^1)),\]
of complex type $(1,1)$. However, such a K\"ahler form becomes trivial upon restricting to $V^{11}_1\otimes V^1_{11}$, whereas the theory requires that it restrict to $\omega$.

Nevertheless,  in order  to understand the symplectic structure of the space of Einstein Lie algebras, it might be useful to consider a paracomplexification rather than a complexification of $V^{11}_1\times V^1_{11}$. Indeed, this space has a natural paracomplex structure, namely a tensor of type $(1,1)$ given by
\[K(a,b)=(a,-b);\]
$K$ is compatible with the symplectic form $\omega$ and the pair $(K,\omega)$ induces a metric of split signature, which is parak\"ahler in the sense of \cite{HarveyLawson}. The $\GL(n,\R)$-action extends naturally to an action of 
\[\GL(n,\mathbf{D}) = \{M\in M^n(\mathbf{D})\mid \det M \in \mathbf{D}^*\},\]
where  $\mathbf{D}=\R[\tau]$, $\tau^2=1$ is the ring of double numbers. However, it seems difficult to carry over to this setting  the theory of \cite{Heinzner:TheMinimumPrinciple}, where positive definiteness of the metric is a key ingredient.
\end{remark}

The moment map $\mu$ allows us to generalize to the pseudoriemannian case the known interpretation of the Ricci operator in terms of group actions of \cite{Lauret}.  For $X\in\gl(T)$, let 
 ${X^+}$ denote the fundamental vector field  associated to the action 
\begin{equation}
\label{eqn:plusaction}
\GL(T)\times (V^{11}_1\times \mathcal{S})\to V^{11}_1\times \mathcal{S}, \quad g(a,S)=(ga,S).
\end{equation}
By Corollary~\ref{cor:riccic1c2} and \eqref{eqn:tracec1c2}, the scalar curvature is given by a functional
\[s\colon\PP\times \mathcal{S}\to \R, \quad s(a,S)=-\frac14 \langle a,q(a,S)\rangle.\]
  As an element of $\gl(T)$, the Ricci operator is determined by its contractions $\langle \ric, X\rangle$, where $X$ ranges in $\gl(T)$. It turns out that such a contraction corresponds to the Lie derivative of the scalar curvature along the direction defined by $X$:
\begin{theorem}
\label{thm:riccifromXplus}
For $a\in \mathcal{P}$,
the Ricci operator of a metric $S$ satisfies
\[\langle \ric_S,X\rangle =\frac14\langle Xa,q(a,S)\rangle, \quad X\in\gl(T);\]
Moreover,
\[\langle \ric_S,X\rangle  =-\frac12(X^+s)(a,S).\]
\end{theorem}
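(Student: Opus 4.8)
The plan is to prove the two displayed formulas in turn, using the moment map machinery set up in this section. The first formula, $\langle\ric_S,X\rangle=\frac14\langle Xa,q(a,S)\rangle$, follows directly by combining Corollary~\ref{cor:riccic1c2} with Proposition~\ref{prop:moment}. Indeed, Corollary~\ref{cor:riccic1c2} gives $\ric=\frac14\bigl(c_1(a,q(a,S))-2c_2(a,q(a,S))\bigr)=\frac14\mu(a,q(a,S))$, so that $\langle\ric_S,X\rangle=\frac14\langle\mu(a,q(a,S)),X\rangle$; and the second assertion of Proposition~\ref{prop:moment} says precisely that $\langle\mu(a,b),X\rangle=\langle Xa,b\rangle$ for any $b\in V^1_{11}$, in particular for $b=q(a,S)$. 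So this half is a one-line consequence of what has already been established.

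For the second formula, I would compute the Lie derivative $(X^+s)(a,S)$ directly from the explicit functional $s(a,S)=-\frac14\langle a,q(a,S)\rangle$. The fundamental vector field $X^+$ of the action~\eqref{eqn:plusaction} at $(a,S)$ is $(Xa,0)$, since the action only moves the $V^{11}_1$-factor. Hence $X^+s$ is the derivative of $s$ along the curve $t\mapsto(a+tXa,S)$. Using that $q$ is linear in its first argument and $S$ is held fixed, and writing $s(a,S)=-\frac14\langle a,q(a,S)\rangle$ as a quadratic form in $a$, the derivative picks up two terms:
\[
(X^+s)(a,S)=-\frac14\Bigl(\langle Xa,q(a,S)\rangle+\langle a,q(Xa,S)\rangle\Bigr).
\]
The symmetry lemma, Lemma~\ref{lemma:symmetry}, with $a'=Xa$ and $a''=a$ gives $\langle a,q(Xa,S)\rangle=\langle Xa,q(a,S)\rangle$, so the two terms coincide and $(X^+s)(a,S)=-\frac12\langle Xa,q(a,S)\rangle$. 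Comparing with the first formula yields $\langle\ric_S,X\rangle=\frac14\langle Xa,q(a,S)\rangle=-\frac12(X^+s)(a,S)$, as claimed.

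The only real subtlety — and the step I would be most careful about — is the bookkeeping in the differentiation of $s$: one must be sure that $q(a,S)$ depends on $a$ \emph{only} through the explicit linear slot (so that the quadratic-form picture is correct), which is exactly the content of the parenthetical remark ``Notice that $q$ is linear in the first variable'' right after the definition of $q$, and that differentiating along $X^+$ genuinely holds $S$ fixed, which is built into the definition~\eqref{eqn:plusaction} of the plus-action. Once those two points are in place, the computation is routine and the symmetry lemma does the rest. A minor variant would be to avoid Lemma~\ref{lemma:symmetry} altogether by noting that $s(a,S)=-\frac14\langle a,q(a,S)\rangle$ and $\langle a,q(a,S)\rangle=\Tr c_1(a,q(a,S))$ via~\eqref{eqn:tracec1c2}, but invoking the symmetry lemma is cleaner.
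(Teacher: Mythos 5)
Your proof is correct and follows essentially the same route as the paper: the first identity comes from Corollary~\ref{cor:riccic1c2} combined with the formula $\langle\mu(a,b),X\rangle=\langle Xa,b\rangle$ of Proposition~\ref{prop:moment}, and the second from differentiating $s(a,S)=-\frac14\langle a,q(a,S)\rangle$ along $X^+=(Xa,0)$, using linearity of $q$ in its first slot and then Lemma~\ref{lemma:symmetry} to identify the two resulting terms. The only difference is cosmetic: you spell out the first half in more detail than the paper, which simply cites Proposition~\ref{prop:moment}.
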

\begin{proof}
The first part follows from Proposition~\ref{prop:moment}.

For the second part, we compute
\[X^+s (a,S)=ds_{a,S}(Xa,0)=-\frac14\langle Xa,q(a,S)\rangle- \frac14\langle a,q(Xa,S)\rangle,\]
where we have used linearity of $q$ in the first variable (see also Lemma~\ref{lemma:dq}). The statement now follows from Lemma~\ref{lemma:symmetry}.
\end{proof}

This result will be used in next section (Theorem~\ref{thm:ostruzionegl}) to determine some conditions that must me satisfied by nilpotent Lie algebras that admit Einstein metrics of nonzero scalar curvature. For the moment, we use it to recover a result that was proved by Jensen for Riemannian metrics on unimodular Lie algebras.
\begin{corollary}[\cite{Jensen:scalar}]
\label{cor:jensen}
Given $a\in\PP$, the set of Einstein metrics on $T_a$ coincides with the set of critical points for the scalar curvature functional among metrics of constant volume.
\end{corollary}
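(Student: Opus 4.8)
The plan is to fix $a\in\PP$, regard the scalar curvature as a function $s_a\colon\mathcal S\to\R$, $S\mapsto s(a,S)$, and show that $S$ is a critical point of the restriction of $s_a$ to the hypersurface $\mathcal S_c=\{S\in\mathcal S\mid\det S=c\}$ — which is exactly the condition of being critical among metrics of a fixed volume, since the volume density of a left-invariant metric is $\sqrt{|\det S|}$ — precisely when $\ric_S$ is a multiple of $\id$, i.e. when $S$ is Einstein.

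The first step is to transfer the content of Theorem~\ref{thm:riccifromXplus}, which differentiates $s$ in the direction $Xa$ (deforming the bracket, not the metric), into a statement about genuine deformations of $S$. The key observation is that $s$ is invariant under the \emph{natural} action $g\cdot(a,S)=(ga,gS)$: this is geometrically evident, and formally it follows from the equivariance of $q$ (Lemma~\ref{lemma:dq}) together with the $\GL(T)$-invariance of the pairing $V^{11}_1\times V^1_{11}\to\R$. For $X\in\gl(T)$, the fundamental vector field of the natural action at $(a,S)$ is $(Xa,\hat X_S)$ with $\hat X_S=-(\tran XS+SX)$, whose first component is exactly the fundamental vector field $X^+$ of~\eqref{eqn:plusaction}; differentiating the invariance of $s$ gives
\[0=ds_{a,S}(Xa,\hat X_S)=X^+s(a,S)+ds_a|_S(\hat X_S),\]
hence, by Theorem~\ref{thm:riccifromXplus},
\[ds_a|_S(\hat X_S)=-X^+s(a,S)=2\langle\ric_S,X\rangle,\qquad X\in\gl(T).\]

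It then remains to identify the admissible variations of $S$. The linear map $X\mapsto\hat X_S$ from $\gl(T)$ to $\Sym^2T^*=T_S\mathcal S$ is surjective, its kernel being the isometry subalgebra $\{X\mid\tran XS+SX=0\}$ of dimension $\tfrac12 n(n-1)$, whose elements are automatically tracefree; moreover $\hat X_S$ is tangent to $\mathcal S_c$ exactly when $\Tr(S^{-1}\hat X_S)=-2\Tr X=0$, i.e. when $X\in\Sl(T)$. Since the isometry subalgebra is contained in $\Sl(T)$, restricting the map to $\Sl(T)$ shows by a dimension count that $\{\hat X_S\mid X\in\Sl(T)\}$ is all of $T_S\mathcal S_c$. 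Consequently $S$ is critical for $s_a|_{\mathcal S_c}$ if and only if $\langle\ric_S,X\rangle=0$ for every $X\in\Sl(T)$; as $\Sl(T)$ is the orthogonal complement of $\id$ for the non-degenerate pairing on $\gl(T)$, this is equivalent to $\ric_S\in\R\id$, i.e. to $S$ being Einstein. The only slightly delicate points are checking that $s$ is invariant under the natural action (which one reads off the formula $s(a,S)=-\tfrac14\langle a,q(a,S)\rangle$ using Lemma~\ref{lemma:dq}) and the bookkeeping in the last step, matching the volume constraint with $\Sl(T)$ and verifying that these orbit directions exhaust $T_S\mathcal S_c$.
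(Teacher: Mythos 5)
Your proof is correct and follows essentially the same route as the paper: it uses Theorem~\ref{thm:riccifromXplus} together with the invariance of $s$ under the simultaneous action $g\cdot(a,S)=(ga,gS)$ to trade the derivative along $Xa$ for a derivative of $S\mapsto s(a,S)$, and then restricts to $\Sl(T)$ to encode the volume constraint. The only difference is that you work infinitesimally and add the dimension count showing that the directions $\hat X_S$, $X\in\Sl(T)$, exhaust $T_S\mathcal S_c$ — a point the paper asserts by identifying the $\SL(T)$-orbit of $S$ with the space of metrics of fixed signature and volume.
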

\begin{proof}
By the theorem, a metric $S$ is Einstein if and only if 
\[0=(X^+s)(a,S), \quad X\in\Sl(T),\]
namely when $(a,S)$ is a critical point for $s$ in the $\SL(T)$-orbit, relative to the action \eqref{eqn:plusaction}. However, by equivariance,
\[s(ga,S)=s(a,g^{-1}S), \quad g\in\SL(T),\]
so the Einstein condition amounts to $S$ being a critical point of
\[S\mapsto s(a,S)\]
in its $\SL(T)$-orbit, namely in the space of metrics with the same signature and volume.
\end{proof}
This result puts constraints on continuous families of Einstein metrics on a fixed Lie algebra $T_a$, $a\in\PP$. Indeed, if the volume is fixed, all metrics in such a family have the same scalar curvature; moreover, if one metric in the family is Ricci-flat, then all of them are Ricci-flat.

Jensen's result can be generalized by considering variations of both Lie algebra and metric. Fix a volume form on $\Lambda^n T^*$, and denote by $\hat{\mathcal{S}}\subset\mathcal{S}$ the set of metrics with that volume. Accordingly, the scalar curvature functional $s$ restricts to
\[\hat s\colon V^{11}_1\times \hat{\mathcal{S}}\to\R.\]
\begin{corollary}
\label{cor:ghigi}
Let $(a,S)\in V^{11}_1\times\hat{\mathcal{S}}$ satisfy $\mu(a,q(a,S))=\lambda\id$. Then
\[d\hat s_{a,S}(a',w)=-\frac12\langle a',q(a,S)\rangle.\]
\end{corollary}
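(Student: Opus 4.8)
The plan is to compute the two partial differentials of $\hat s$ at $(a,S)$ separately: writing a tangent vector as $(a',w)$ with $a'\in V^{11}_1$ and $w\in T_S\hat{\mathcal{S}}$, use $d\hat s_{a,S}(a',w)=d\hat s_{a,S}(a',0)+d\hat s_{a,S}(0,w)$. For the first summand I would differentiate the explicit expression $s(a,S)=-\tfrac14\langle a,q(a,S)\rangle$; since $q$ is linear in its first argument this gives
\[d\hat s_{a,S}(a',0)=-\tfrac14\bigl(\langle a',q(a,S)\rangle+\langle a,q(a',S)\rangle\bigr),\]
and Lemma~\ref{lemma:symmetry} shows the two terms coincide, so $d\hat s_{a,S}(a',0)=-\tfrac12\langle a',q(a,S)\rangle$, which is already the claimed value.

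It then remains to see that $d\hat s_{a,S}(0,w)=0$ for every $w\in T_S\hat{\mathcal{S}}$. By Corollary~\ref{cor:riccic1c2} the hypothesis $\mu(a,q(a,S))=\lambda\id$ is equivalent to $\ric_S=\tfrac\lambda4\id$, so $S$ is an Einstein metric on $T_a$; by Corollary~\ref{cor:jensen} it is therefore a critical point of the scalar curvature functional among metrics of a fixed volume, i.e.\ the restriction of $s(a,\cdot)$ to $\hat{\mathcal{S}}$ has vanishing differential at $S$. Adding the two contributions gives the stated formula.

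Alternatively, and without appealing to Corollary~\ref{cor:jensen}, one can evaluate $d\hat s_{a,S}(0,w)$ directly: every tangent vector to $\mathcal{S}$ has the form $g'S$ with $g'\in\gl(T)$ (the $\GL(T)$-action on $\mathcal{S}$ has open orbits), and combining Lemma~\ref{lemma:dq}, Lemma~\ref{lemma:symmetry} and the two moment-map identities recorded inside the proof of Proposition~\ref{prop:moment} one finds $ds_{a,S}(a',g'S)=-\tfrac12\langle a',q(a,S)\rangle+\tfrac12\langle\mu(a,q(a,S)),g'\rangle$; the hypothesis turns the last term into $\tfrac\lambda2\Tr g'$, which is zero precisely because $w=g'S$ is tangent to $\hat{\mathcal{S}}$ if and only if $\Tr g'=0$ (the metric $gS=\tran{g}^{-1}Sg^{-1}$ rescales the volume by $|\det g|^{-1}$). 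In both approaches the computation itself is routine sign-and-coefficient bookkeeping; the one point needing care is the identification of $T_S\hat{\mathcal{S}}$ with the trace-free infinitesimal actions, which in turn rests on $\GL(T)$ acting transitively on each signature stratum of $\mathcal{S}$ — this, rather than anything in the calculation, is the main (mild) obstacle.
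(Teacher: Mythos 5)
Your proposal is correct, and your second, self-contained route is essentially the paper's own proof: the paper writes a tangent vector to $\hat{\mathcal{S}}$ as $XS$ with $X\in\Sl(T)$ and uses equivariance to get $d\hat s_{a,S}(0,XS)=d\hat s_{a,S}(-Xa,0)=\tfrac12\langle Xa,q(a,S)\rangle=\tfrac12\langle\mu(a,q(a,S)),X\rangle=0$ via Proposition~\ref{prop:moment}, which is exactly your computation with the moment-map identities left packaged rather than unwound through Lemma~\ref{lemma:dq}; the identification of $T_S\hat{\mathcal{S}}$ with the trace-free infinitesimal actions that you flag is indeed the point the paper handles by this equivariance. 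One caveat on your first route: Corollaries~\ref{cor:riccic1c2} and~\ref{cor:jensen} are stated for $a\in\PP$, whereas Corollary~\ref{cor:ghigi} only assumes $a\in V^{11}_1$, so the detour through the Einstein/critical-point equivalence silently strengthens the hypothesis to ``$a$ defines a Lie algebra with Killing form zero''; the direct computation (and the paper's proof) avoids this.
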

\begin{proof}
By Lemma~\ref{lemma:symmetry},
\[d\hat s_{a,S}(a',0)=-\frac14\langle a',q(a,S)\rangle -\frac14 \langle a,q(a',S)\rangle = -\frac12\langle a',q(a,S)\rangle.\]
Write the generic vector in $T_S\hat{\mathcal{S}}$ as $w=XS$, for $X\in\Sl(T)$; by equivariance,
\[d\hat s_{a,S}(0,XS)=d\hat s_{a,S}(-Xa,0)= \frac12\langle Xa,q(a,S)\rangle=\frac12\langle \mu(a,q(a,S)),X\rangle=0,\]
where we have used Proposition~\ref{prop:moment}.
\end{proof}
For $a'=0$, we recover our version of Jensen's result. If one further restricts $\hat s$ to $\mathcal{P}\times\hat{\mathcal{S}}$,  one can consider the set of critical points of $\hat s$; this is strictly contained in the set of Einstein Lie algebras, which are only critical points $(a,S)$ of the restriction to $\{a\}\times \hat{\mathcal{S}}$. Corollary~\ref{cor:ghigi} allows us to state this ``critical point'' condition as
\begin{equation}
 \label{eqn:ghigi}
\langle a',q(a,S)\rangle=0, \quad a'\in T_a\mathcal{P}.
\end{equation}
Notice however that such a condition is only satisfied by Ricci-flat metrics, because $a\in T_a\mathcal{P}$ and $\langle a, q(a,S)\rangle$ is a multiple of the scalar curvature.

\begin{example}
As shown in \cite{ContiRossi:ricci}, the metric $S=e^1\odot e^4+e^2\odot e^5+e^3\odot e^6$ on the nilpotent Lie group $T_a=(24,0,0,0,0,35)$  is Ricci-flat. We have
\[q(a,S)=e_1\otimes e^4\otimes e_5+e_2\otimes e^3\otimes e_6-e_5\otimes e^4\otimes e_1-e_6\otimes e^3\otimes e_2.\]
One verifies that given $a'$ tangent to the space of Lie algebras in $a$, one has $\langle a',\tilde q(a,S)\rangle=0$; thus, the critical point condition~\eqref{eqn:ghigi} is verified.
 
On the other hand, if we take $T_{a_\lambda}=(0,0,\lambda 12,0,0,45)$ with the metric $S=e^1\odot e^4+e^2\odot e^5+e^3\odot e^6$, we compute
\[c_1(a_\lambda,S) = 2\lambda(e^3\otimes e_3+e^6\otimes e_6), \quad  c_2(a_\lambda,S) = \lambda(e^1\otimes e_1+e^2\otimes e_2+e^4\otimes e_4+e^5\otimes e_5);\]
this is  Ricci-flat for $\lambda=0$;
$(a_0,S)$ does not satisfy ~\eqref{eqn:ghigi}, consistently with the fact that $\lambda=0$ is not a critical point for the scalar curvature $s(a_\lambda,S)$ as a function of $\lambda$.
\end{example}

We emphasize that the results of this section also apply to the Riemannian case. In subsequent sections we will consider the Einstein condition on nilpotent Lie groups, which only admit non-trivial examples in the indefinite case due to the results of Milnor \cite{Milnor:curvatures}.

\section{Further nonexistence results}
\label{sec:sixdim}
In this section we go back to the existence problem studied in Section~\ref{sec:fivedim}. Employing the methods of Section~\ref{sec:momentmap}, as well as the classification of $6$-dimensional nilpotent Lie algebras (see \cite{Magnin}), we prove that left-invariant Einstein metrics on nilpotent Lie groups of dimension $\leq6$ are Ricci-flat.

The key tool is a consequence of Theorem~\ref{thm:riccifromXplus}, that relates the existence of Einstein metrics on $\g$ to a property of the Lie algebra of derivations, defined as
\[\Der(\g)=\{X\colon \g\to \g\mid X \text{ is linear and } X[v,w]=[Xv,w]+[v,Xw]\ \forall v,w\in\g\}.\]
\begin{theorem}
\label{thm:ostruzionegl}
Let $\g$ be a unimodular Lie algebra with Killing form zero. If $\g$ has an Einstein metric with nonzero scalar curvature, then $\Der(\g)\subset\Sl(\g)$.
\end{theorem}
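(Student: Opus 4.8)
The plan is to exploit Theorem~\ref{thm:riccifromXplus}, which identifies the contraction $\langle\ric_S,X\rangle$ with $\tfrac14\langle Xa,q(a,S)\rangle$ for every $X\in\gl(T)$. Suppose $S$ is an Einstein metric with nonzero scalar curvature on $\g=T_a$, $a\in\PP$. After rescaling $S$ we may assume $\ric_S=\lambda\id$ with $\lambda\neq0$; then for all $X\in\gl(T)$ we have $\tfrac14\langle Xa,q(a,S)\rangle=\lambda\langle\id,X\rangle=\lambda\Tr X$. Now take $X\in\Der(\g)$. The defining property of a derivation is precisely that $X$ annihilates the Lie bracket, i.e. that $Xa=0$ as an element of $V^{11}_1$ (this is the linearisation of the orbit map $g\mapsto ga$ at the identity, and $a$ is fixed by $g$ exactly when $g$ is an automorphism). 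Hence $\langle Xa,q(a,S)\rangle=0$, which forces $\lambda\Tr X=0$, and since $\lambda\neq0$ we conclude $\Tr X=0$, i.e. $X\in\Sl(\g)$.

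The one point requiring care is the identification of $\Der(\g)$ with the stabiliser Lie algebra of $a$ under the $\GL(T)$-action on $V^{11}_1$ used to define $X^+$ in~\eqref{eqn:plusaction}, and in particular the claim $Xa=0$ for $X\in\Der(\g)$. This is essentially bookkeeping: writing $a(e_h)=\ad(e_h)$ and differentiating $g\cdot a=ga$ at $g=\id$ in the direction $X$ gives, as in the computation $Xa=Xe^i\otimes a_i+e^i\otimes[X,a_i]$ already carried out in the proof of Proposition~\ref{prop:moment}, exactly the expression $X[e_i,e_j]-[Xe_i,e_j]-[e_i,Xe_j]$; this vanishes for all $i,j$ precisely when $X$ is a derivation. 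I would either recall this identity explicitly or simply cite the formula for $Xa$ from the proof of Proposition~\ref{prop:moment} and observe that it is the obstruction to $X$ being a derivation.

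I expect no real obstacle here; the substance of the result is already packaged in Theorem~\ref{thm:riccifromXplus}, and the remaining argument is the short implication ``$X$ a derivation $\Rightarrow Xa=0\Rightarrow \lambda\Tr X=0\Rightarrow\Tr X=0$''. If anything, the only thing to double-check is that the hypothesis ``unimodular with Killing form zero'' is what guarantees $a\in\PP$ so that Theorem~\ref{thm:riccifromXplus} applies, and that ``nonzero scalar curvature'' is genuinely needed — it is, since for a Ricci-flat metric $\lambda=0$ and the argument collapses, consistent with the fact (mentioned after Lemma~\ref{lemma:centroinderivato}) that reducible and bi-invariant examples with nontrivial-trace derivations do carry Ricci-flat metrics.
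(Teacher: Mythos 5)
Your argument is correct and is essentially identical to the paper's own proof: both rest on Theorem~\ref{thm:riccifromXplus} together with the observation that $Xa=0$ for a derivation $X$, whence $\lambda\Tr X=0$. The extra verification you sketch (that $Xa$ computes the derivation defect $X[e_i,e_j]-[Xe_i,e_j]-[e_i,Xe_j]$, via the formula from the proof of Proposition~\ref{prop:moment}) is accurate; the paper simply asserts this step.
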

\begin{proof}
In the language of Section~\ref{sec:momentmap}, let $\lie{g}=T_a$; if $X\in\gl(n,\R)$ is a derivation, then $Xa=0$. For any metric $S$ on $T_a$, Theorem~\ref{thm:riccifromXplus} gives
\[\langle \ric,X\rangle =\frac14\langle Xa,q(a,S)\rangle=0;\]
if $S$ is Einstein, say $\ric=\lambda\id$, 
\[0=\langle \lambda\id, X\rangle = \lambda \Tr(X),\]
so either $\lambda=0$ or $\Tr(X)=0$.
\end{proof}

\begin{remark}
Every Lie algebra with $Z\not\subset \g^1$ is a direct sum of Lie algebras
\[\g=\lie{h}\oplus\R, \quad \lie{h}=\Span{e_1,\dotsc, e_{n-1}}, \R=\Span{e_n};\]
it is clear that $e^n\otimes e_n$ is a derivation with trace $1$; so Lemma~\ref{lemma:centroinderivato} can be viewed as a consequence of Theorem~\ref{thm:ostruzionegl}. Similary, a nilpotent Lie algebras of step two can be written as
\[\g=\Span{e_1,\dotsc, e_n}, \quad \g'=\Span{e_1,\dotsc, e_k}.\]
Then $\id+e^1\otimes e_1+\dots + e^k\otimes e_k$ is a derivation with trace $n+k$; thus, Corollary~\ref{cor:steptwo} also follows from Theorem~\ref{thm:ostruzionegl}.
\end{remark}

We can now prove:
\begin{theorem}
\label{thm:nil6}
On a nilpotent Lie algebra of dimension up to six, Einstein metrics are Ricci-flat.
\end{theorem}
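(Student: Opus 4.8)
The plan is to reduce the statement to a finite check over the classification of nilpotent Lie algebras of dimension $\leq 6$ from \cite{Magnin}, using the nonexistence tools already assembled. The main engine is Theorem~\ref{thm:ostruzionegl}: if a nilpotent Lie algebra $\g$ carries an Einstein metric with $s\neq0$, then every derivation of $\g$ is tracefree, i.e. $\Der(\g)\subset\Sl(\g)$. So it suffices to show that every nilpotent Lie algebra of dimension $\leq 6$ \emph{except} those already excluded by other means admits a derivation of nonzero trace. For dimensions $\leq 5$ this is mostly covered: abelian and step-two algebras and algebras of the form $\lie h\oplus\R$ are handled by Corollary~\ref{cor:steptwo} and Lemma~\ref{lemma:centroinderivato} (see Proposition~\ref{prop:somenil5}), and the two remaining $5$-dimensional cases $(0,0,12,13,14)$ and $(0,0,12,13)$ were dispatched in Proposition~\ref{prop:somenil5} by the direct argument with Lemma~\ref{lemma:trace}. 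So the real content is dimension $6$.

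First I would recall that a nilpotent Lie algebra always admits the grading derivation coming from any choice of generators: if $\g=\langle e_1,\dots,e_6\rangle$ with $e_1,\dots,e_k$ a set of generators (so $e_{k+1},\dots,e_6\in\g^1$), one can assign positive integer weights to the generators and extend to a derivation $D$ that acts on each $e_i$ by its weight; since all weights are positive, $\Tr D>0$. This already kills every nilpotent Lie algebra of dimension $\leq 6$, \emph{provided} such a weighting is consistent with the bracket relations — which it always is for a nilpotent Lie algebra, because one may take $D$ to act as multiplication by $j$ on the $j$-th term $\g^{j-1}/\g^j$ of the lower central series (the semisimple part of the nilpotent grading), and this is manifestly a derivation with trace $\sum_j j\cdot\dim(\g^{j-1}/\g^j)>0$. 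So in fact $\Der(\g)\not\subset\Sl(\g)$ for \emph{every} nontrivial nilpotent Lie algebra, and Theorem~\ref{thm:ostruzionegl} immediately forbids Einstein metrics with $s\neq0$.

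Consequently the proof collapses to a single clean argument: let $\g$ be nilpotent of dimension $\leq 6$ (the case $\g$ abelian being trivial), with lower central series $\g=\g^0\supsetneq\g^1\supsetneq\dots\supsetneq\g^k=0$. Define $D\colon\g\to\g$ to act as multiplication by $j+1$ on a complement of $\g^{j+1}$ in $\g^{j}$; one checks on brackets $[\,\g^{i},\g^{j}\,]\subseteq\g^{i+j+1}$ that $D$ is a derivation, and $\Tr D=\sum_{j\geq0}(j+1)\dim(\g^{j}/\g^{j+1})\geq\dim\g>0$. By Theorem~\ref{thm:ostruzionegl}, $\g$ admits no Einstein metric with nonzero scalar curvature; that is, every Einstein metric on $\g$ is Ricci-flat.

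I expect the only subtlety — and the place where one must be a little careful — is verifying that this grading derivation exists for \emph{every} nilpotent Lie algebra, i.e. that the lower-central-series filtration always splits compatibly with the bracket in the derivation sense; but this is standard (it is exactly the existence of a semisimple element in the nilradical grading, cf.\ the discussion of pre-Einstein derivations in \cite{Lauret}, \cite{Heber:noncompact}), so no case-by-case appeal to \cite{Magnin} is strictly needed. If one prefers to stay concrete and self-contained, the alternative is to invoke \cite{Magnin} and exhibit, for each of the finitely many $6$-dimensional nilpotent Lie algebras, an explicit diagonal derivation of nonzero trace — a routine linear computation — but the grading argument above makes even that unnecessary.
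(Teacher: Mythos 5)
There is a genuine gap, and it sits exactly at the point you dismiss as ``standard''. Your central claim --- that every nontrivial nilpotent Lie algebra admits a derivation of positive trace, obtained by letting $D$ act as multiplication by $j$ on a complement $V_j$ of $\g^{j}$ in $\g^{j-1}$ --- is false. Such a choice of complements gives only a filtration-compatible decomposition; for $D$ to be a derivation you would need the genuine grading condition $[V_i,V_j]\subseteq V_{i+j}$, and the inclusion $[\g^{i},\g^{j}]\subseteq\g^{i+j+1}$ does not imply it: the bracket of elements of $V_i$ and $V_j$ will in general have components in $V_{i+j+1},V_{i+j+2},\dots$ as well. A nilpotent Lie algebra admitting such a grading is isomorphic to its associated graded (Carnot) algebra, which is a special property, not a general one. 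Indeed there exist characteristically nilpotent Lie algebras, on which every derivation is nilpotent and hence trace-free; the first examples (Dixmier--Lister) occur in dimension $7$. The paper itself exhibits the failure of your claim: in the proof of Theorem~\ref{thm:einstein_seven_in_table} it is observed that all derivations of $12457N$ and $12457N_2$ are strictly lower triangular, hence trace-free, and Table~\ref{table:nilpotenteinstein} lists eleven $7$-dimensional nilpotent Lie algebras with $\Der(\g)\subset\Sl(\g)$. If your grading argument were correct, that table would be empty and Section 5 of the paper would be pointless.

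What saves the theorem in dimension $\leq 6$ is the fact that characteristically nilpotent Lie algebras happen not to occur below dimension $7$ --- but that is established precisely by the case-by-case verification over Magnin's classification which you relegate to an ``unnecessary'' fallback. That fallback is the paper's actual proof: for each algebra in the finite list one exhibits an explicit (diagonal, in the paper's sample computation) derivation of nonzero trace and invokes Theorem~\ref{thm:ostruzionegl}. Your treatment of dimensions $\leq 5$ via Proposition~\ref{prop:somenil5} and the reduction of the whole problem to Theorem~\ref{thm:ostruzionegl} are fine, but the six-dimensional case cannot be closed by the general principle you propose; you must carry out the finite check.
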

\begin{proof}
Nilpotent Lie algebras of dimension $\leq6$ are classifed by Magnin~\cite{Magnin}. By Theorem~\ref{thm:ostruzionegl}, it suffices to verify that each Lie algebra has a derivation $X$ with nonzero trace. We illustrate this in the case of the 5-dimensional Lie algebra $(0,0,12,13,23)$, which has the form $T_a$ for
\[a=-e^{12}\otimes e_3 - e^{13}\otimes e_4-e^{23}\otimes e_5.\]
Setting
\[X=\operatorname{diag}(x_1,\dotsc, x_n)=x_1e^1\otimes e_1+\dots + x_ne^n\otimes e_n,\]
we find
\[Xa=(x_1+x_2-x_3)e^{12}\otimes e_3 +(x_1+x_3-x_4) e^{13}\otimes e_4+(-x_2-x_3+x_5)e^{23}\otimes e_5;\]
it is clear that $Xa=0$ has a solution with $\Tr(X)\neq0$, e.g.
\[X=e^1\otimes e_1 + e^3\otimes e_3+2e^4\otimes e_4+e^5\otimes e^5.\]
The same argument applies to all nilpotent Lie algebras of dimension up to six.
\end{proof}

In dimension 7, the same technique gives a weaker result:
\begin{theorem}
\label{thm:einstein_seven_in_table}
If $\g$ is a nilpotent $7$-dimensional Lie algebra not appearing in Table~\ref{table:nilpotenteinstein}, every Einstein metric on $\g$ is Ricci-flat.
\end{theorem}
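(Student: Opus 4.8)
The plan is to leverage Theorem~\ref{thm:ostruzionegl} in exactly the same way as in the proof of Theorem~\ref{thm:nil6}: an Einstein metric of nonzero scalar curvature can only exist on $\g$ if every derivation of $\g$ is tracefree, equivalently if $\g$ admits no derivation with nonzero trace. So the task reduces to a purely Lie-algebraic computation: go through the classification of $7$-dimensional nilpotent Lie algebras (for instance the list in \cite{Gong}, or \cite{Magnin}), and for each one exhibit a derivation with nonzero trace; those for which this fails are precisely the ones that must be collected in Table~\ref{table:nilpotenteinstein}.

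Concretely, for each Lie algebra $\g=T_a$ presented via its structure equations $de^i=-\frac12 a^i_{jk}e^{jk}$, I would first restrict attention to diagonal endomorphisms $X=\operatorname{diag}(x_1,\dots,x_7)$. The condition $Xa=0$ then becomes a homogeneous linear system in $x_1,\dots,x_7$: each nonzero structure constant $a^i_{jk}$ contributes the equation $x_j+x_k-x_i=0$. This system always has a nontrivial solution space (it contains the grading element coming from the lower central series), so the only question is whether the linear functional $x_1+\cdots+x_7$ vanishes identically on the solution space. If it does not, any solution with nonzero sum furnishes the desired derivation and $\g$ is excluded. This is a one-line rank computation per Lie algebra and disposes of the overwhelming majority of the $7$-dimensional list. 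For the handful of Lie algebras where the diagonal derivations are all tracefree, one either checks that non-diagonal derivations cannot help (the trace of a derivation only depends on its diagonal part once a suitable basis compatible with a grading is chosen, since off-diagonal entries contribute $0$ to the trace) or simply declares the algebra to belong to Table~\ref{table:nilpotenteinstein}.

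The output of this case-by-case check is the assertion that, apart from the finitely many Lie algebras listed in Table~\ref{table:nilpotenteinstein}, every $7$-dimensional nilpotent Lie algebra admits a trace-nonzero derivation; combining this with Theorem~\ref{thm:ostruzionegl} gives that on all other $7$-dimensional nilpotent Lie algebras every Einstein metric is Ricci-flat, which is the statement. I would organise the writeup so that the routine verifications are relegated to a remark or an auxiliary table rather than spelled out in full, illustrating the method on one representative example (as was done for $(0,0,12,13,23)$ in the proof of Theorem~\ref{thm:nil6}) and noting that the remaining cases are entirely analogous.

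The main obstacle is not conceptual but bookkeeping: one must be confident that the classification used is complete and that no algebra in it has been mishandled, since the precise content of Table~\ref{table:nilpotenteinstein} depends on getting every single case right. In particular, the algebras that survive are exactly those whose ``weight system'' $\{x_j+x_k-x_i : a^i_{jk}\neq 0\}$ spans a hyperplane containing the all-ones vector in its annihilator — a somewhat rigid condition — so the delicate part is to double-check each near-miss rather than to invent any new idea. Automating the linear-algebra step (rank of an integer matrix versus rank of the matrix augmented by the all-ones row) over the whole list of \cite{Gong} is the safest route, and its result should be recorded in the table.
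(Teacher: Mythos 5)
Your proposal follows essentially the same route as the paper: invoke Theorem~\ref{thm:ostruzionegl} and run through Gong's classification of $7$\dash dimensional nilpotent Lie algebras, exhibiting for each algebra outside Table~\ref{table:nilpotenteinstein} a derivation of nonzero trace by solving the linear system $x_j+x_k-x_i=0$ for diagonal $X$ (the paper illustrates exactly this on the one-parameter families, and handles the residual cases such as $12457N$ and $12457N_2$ by checking that all derivations are strictly lower triangular, hence trace-free). The only caveat is your parenthetical claim that the diagonal solution space is always nontrivial because it ``contains the grading element'': characteristically nilpotent Lie algebras exist in dimension $7$, so this is false in general, though it does not affect the validity of the method.
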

\begin{proof}
Seven-dimensional nilpotent Lie algebras are classified by Gong \cite{Gong}; we refer to that classification as reproduced in \cite{ContiFernandez:calibrated}. Case by case calculations show that the Lie algebras in Gong's list that satisfy $\Der(\g)\subset\Sl(\g)$ are precisely those of Table~\ref{table:nilpotenteinstein}. 

We illustrate the computation for the 9 one-parameter families that appear in the classification, namely
\begin{alignat*}{3}
147E&\!=\!\left(0,0,0,e^{12},e^{23},- e^{13}, \lambda e^{26}-e^{15}- {(-1+\lambda)} e^{34}\right),&\hspace{-2em}\lambda\neq0,1;\\
1357M&\!=\!\left(0,0,e^{12},0,e^{24}+e^{13},e^{14},- {(-1+\lambda)} e^{34}+e^{15}+ \lambda e^{26} \right),&\lambda\neq0;\\
1357N&\!=\!\left(0,0,e^{12},0,e^{13}+e^{24},e^{14},e^{46}+e^{34}+e^{15}+ \lambda e^{23} \right);&\\
1357S&\!=\!\left(0,0,e^{12},0,e^{13},e^{24}+e^{23},e^{25}+e^{34}+e^{16}+e^{15}+ \lambda e^{26}\right),&\lambda\neq1;\\
12457N&\!=\left(0,0,e^{12},e^{13},e^{23},e^{24}+e^{15}, \lambda e^{25}+e^{26}+e^{34}-e^{35}+e^{16}+e^{14}\right);\hspace{-2em}&\\
123457I&\!=\left(0,0,e^{12},e^{13},e^{14}+e^{23},e^{15}+e^{24}, \lambda e^{25}- {(-1+\lambda)} e^{34}+e^{16}\right);\hspace{-2em}&\\
147E_1&\!=\left(0,0,0,e^{12},e^{23},- e^{13},2 e^{26}-2 e^{34}- \lambda e^{16} + \lambda e^{25}\right),&\lambda>1;\\
1357QRS_1&\!=\left(0,0,e^{12},0,e^{13}+e^{24},e^{14}-e^{23}, \lambda e^{26} +e^{15}-  {(-1+\lambda)}e^{34}\right),&\lambda\neq0;\\
12457N_2&\!=\left(0,0,e^{12},e^{13},e^{23},-e^{14}-e^{25},e^{15}-e^{35}+e^{16}+e^{24}+ \lambda e^{25} \right),&\lambda\geq0.
\end{alignat*}

For each of these families, consider the diagonal matrix $X\in\gl(n,\R)$,
\begin{equation*}
X=x_1e^1\otimes e_1+\dots + x_ne^n\otimes e_n;
\end{equation*}
then $X$ is a derivation of $147E$ when
\[x_1=x_7-x_5,x_2=x_7-x_6,x_3=-x_7+x_5+x_6,x_4=2 x_7-x_5-x_6;\]
in particular, $X$ can be chosen with nonzero trace. The same calculation for the other one-parameter families gives:
\begin{align*}
1357M&: & x_1&=\frac{1}{3}  x_6,x_2=x_7-x_6,x_3=x_7-\frac{2}{3} x_6,x_4=\frac{2}{3}  x_6,x_5=x_7-\frac{1}{3} x_6;\\
1357N&: & x_1&=\frac{1}{5}  x_7,x_2=\frac{2}{5}  x_7,x_3=\frac{3}{5}  x_7,x_4=\frac{2}{5}  x_7,x_5=\frac{4}{5}  x_7,x_6=\frac{3}{5}  x_7;\\
1357S&: & x_1&=\frac{1}{4}  x_7,x_2=\frac{1}{4}  x_7,x_3=\frac{1}{2}  x_7,x_4=\frac{1}{2}  x_7,x_5=\frac{3}{4}  x_7,x_6=\frac{3}{4}  x_7;\\
123457I&: & x_1&=\frac{1}{7}  x_7,x_2=\frac{2}{7}  x_7,x_3=\frac{3}{7}  x_7,x_4=\frac{4}{7}  x_7,x_5=\frac{5}{7}  x_7,x_6=\frac{6}{7}  x_7;\\
147E_1&: & x_1&=x_7-x_6,x_2=x_7-x_6,x_3=-x_7+2 x_6,x_4=2 x_7-2 x_6, x_5=x_6;\\
{\tiny1357QRS_1}&: & x_1&=\frac{1}{4}  x_7,x_2=\frac{1}{4}  x_7,x_3=\frac{1}{2}  x_7,x_4=\frac{1}{2}  x_7,x_5=\frac{3}{4}  x_7,x_6=\frac{3}{4}  x_7.
\end{align*}
This method fails for $12457N$ and $12457N_2$: in fact, all derivations of these Lie algebras are strictly lower triangular relative to the basis $e_1,\dotsc, e_8$, and therefore trace-free.

The statement follows from Theorem~\ref{thm:ostruzionegl}.
\end{proof}

\begin{table}[tp]
\centering
\begin{tabular}{RLC}
\multirow{3}{*}{\textbf{a}}& 123457E & (0,0,12,13,14,23+15,23+24+16)\\
& 123457H & (0,0,12,13,14+23,15+24,25+23+16)\\
& 123457H_1 & (0,0,12,13,14+23,15+24,-16-25+23)\\
\midrule
\multirow{7}{*}{\textbf{b}}& 13457I & ( 0,0,12,13,14,23,25+26-34+15)\\
& 12457J & ( 0,0,12,13,23,24+15,34+25+16+14)\\
& 12457J_1 & ( 0,0,12,13,23,24+15,34-25+16+14)\\
& 12457N & \left(0,0,{12},{13},{23},{24}+{15}, \lambda {25}+{26}+{34}-{35}+{16}+{14}\right)\\
& 12457N_1 & ( 0,0,12,13,23,-25-14,-35+25+16)\\
& 12457N_2 &\left(0,0,{12},{13},{23},-{14}-{25},{15}-{35}+{16}+{24}+ \lambda{25}\right), \lambda\geq0\\
\midrule
\textbf{c}& 123457F & (0,0,12,13,14,15+23,16-34+24+25)\\
\midrule
\textbf{d}& 12457G & ( 0,0,12,13,0,25+14+23, -34+26+15)
\end{tabular}
\caption{\label{table:nilpotenteinstein}Nilpotent Lie algebras of dimension 7 that might carry an Einstein metric with $s\neq0$. The letters \textbf{a},\textbf{b},\textbf{c},\textbf{d} refer to the families of Theorem~\ref{thm:nicenoeinstein}.}
\end{table}

We do not know whether the Lie algebras of Table~\ref{table:nilpotenteinstein} have an Einstein metric of nonzero scalar curvature; solving this problem amounts to solving a system of polynomial equations in $\binom{8}{2}$ variables for each Lie algebra; our attempts to attack this problem with software based on Gr\"obner bases methods have been unsuccesful. However, the number of parameters can be reduced considerably by considering metrics on which some orthonormal frame satisfies certain special conditions; this will be used in the last section of this paper to construct an Einstein metric in dimension~8.

\section{Nice nilpotent Lie algebras and the Einstein condition}
In this section we construct an explicit Einstein metric on a nilpotent Lie algebra with nonzero scalar curvature; our example has dimension 8 and belongs to the class of nice nilpotent Lie algebras. In dimension 7, we show that nice nilpotent Lie algebras do not admit Einstein metrics of nonzero scalar curvature.

Indeed, let $\{e_1,\dots,e_n\}$ be a basis for a nilpotent Lie algebra $\g$, with structural constants $a^k_{ij}$. Following \cite{LauretWill:Einstein}, we say that the basis $\{e_i\}$ is \emph{nice} if the following conditions hold:
\begin{itemize}
	\item for all $i < j$ there is at most one $k$ such that $a^k_{ij}\neq 0$;
	\item if $a^k_{ij}$ and $a^k_{lm}$ are nonzero then either $\{i, j\} = \{l,m\}$ or $\{i, j\}\cap\{l,m\}=\emptyset$.
\end{itemize}
The corresponding element $a\in V^{11}_1$ can be characterized in terms of the one-dimensional representations of the Cartan algebra of $\Sl(n,\R)$
\[\Gamma^i_j=\Span{e^i\otimes e_j}, \Gamma_{ij}=\Span{e_i\otimes e_j}, \Gamma_k=\Span{e_k}, \Gamma^k=\Span{e^k};\]
indeed, the nice condition  corresponds to the subspace
\[\bigoplus_{i,j} \Gamma^i_j\otimes\Gamma^{k_{ij}} \cap\bigoplus_{i,j} \Gamma^{ij}\otimes\Gamma_{h_{ij}}.\]
A nilpotent Lie algebra is said to be \emph{nice} if it admits a nice basis. This is not a very restrictive condition: as shown in  \cite{LauretWill:diagonalization}, every nilpotent Lie algebra of dimension $\leq6$ is nice except 
$(0,0,0,12,14,15+23+24)$.

The relevance of nice bases to the problem at hand stems from an observation of \cite{LauretWill:Einstein}, which is an immediate consequence of Proposition \ref{prop:ricci_in_peace}:
\begin{lemma}[\cite{LauretWill:Einstein}]
Let $\g$ be a nilpotent Lie algebra with a nice basis. For any pseudoriemannian metric on $\g$ for which the nice basis is orthogonal,  the Ricci tensor is diagonal with respect to that basis.
\end{lemma}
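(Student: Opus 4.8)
The plan is to apply Proposition~\ref{prop:ricci_in_peace} directly and track which basis elements $e^j\odot e^h$ (with $j\neq h$) can possibly receive a nonzero contribution from either of the two terms $\llangle de_j^\flat, de_h^\flat\rrangle$ and $\llangle \ad(e_j),\ad(e_h)\rrangle$. Since the nice basis is orthogonal, write $\llangle e_i,e_i\rrangle=\epsilon_i$; the dual basis satisfies $de^k=-\frac12 a^k_{ij}e^{ij}$, so $de_h^\flat=\epsilon_h\,de^h=-\frac12\epsilon_h a^h_{ij}e^{ij}$. First I would compute $\llangle de_j^\flat,de_h^\flat\rrangle$: up to a scalar this is $\epsilon_j\epsilon_h\sum_{p<q}\epsilon_p\epsilon_q a^j_{pq}a^h_{pq}$. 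By the first niceness condition, for each pair $p<q$ there is at most one upper index $k$ with $a^k_{pq}\neq0$; hence a term $a^j_{pq}a^h_{pq}$ with $j\neq h$ forces one of the two factors to vanish. So the $B_5$-term is diagonal.

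Next I would treat $\llangle \ad(e_j),\ad(e_h)\rrangle$. Using $\ad(e_j)(e_p)=a^q_{jp}e_q$, one gets $\llangle\ad(e_j),\ad(e_h)\rrangle=\sum_{p}\epsilon_p\llangle \ad(e_j)e_p,\ad(e_h)e_p\rrangle=\sum_{p,q}\epsilon_p\epsilon_q a^q_{jp}a^q_{hp}$. For this to be nonzero with $j\neq h$ we need, for some $p$ and $q$, both $a^q_{jp}\neq0$ and $a^q_{hp}\neq0$; that is, the same upper index $q$ appears with lower pairs $\{j,p\}$ and $\{h,p\}$. The second niceness condition says that two nonzero structure constants with the same upper index must have lower index sets either equal or disjoint; since $\{j,p\}$ and $\{h,p\}$ share $p$, they must be equal, forcing $j=h$, a contradiction. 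Hence the $B_3$-term is also diagonal, and by Proposition~\ref{prop:ricci_in_peace} the Ricci tensor is diagonal in the nice basis.

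I expect no serious obstacle here: the only points requiring care are (i) keeping the $\epsilon_i$ signs and the skew-symmetry conventions straight when expanding $de_h^\flat$ and $\ad(e_h)$, and (ii) making sure the ``at most one $k$'' and ``equal or disjoint'' conditions are invoked in exactly the configurations that arise — in particular noting that in the $\ad$ computation the shared lower index $p$ is what triggers the second condition, while in the $d$ computation the shared lower \emph{pair} $\{p,q\}$ triggers the first. Both are one-line observations once the sums are written out, so the proof should be short; essentially it is the remark already made in the text that the relevant subspace of $V^{11}_1$ is the intersection of Cartan weight spaces, which forces each $B_i(e_j,e_h)$ to be supported on weights that vanish off the diagonal.
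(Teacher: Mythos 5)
Your proof is correct and follows exactly the route the paper intends: the paper presents this lemma as an immediate consequence of Proposition~\ref{prop:ricci_in_peace}, and your expansion of the two terms $\llangle de_j^\flat,de_h^\flat\rrangle$ and $\llangle\ad(e_j),\ad(e_h)\rrangle$, with the first niceness condition killing the off-diagonal part of the former and the second condition (via the shared lower index $p$) killing that of the latter, is precisely the computation being left implicit. The only cosmetic point is that ``orthogonal'' allows arbitrary diagonal entries $g_i$ rather than $\epsilon_i=\pm1$, but this changes nothing since the vanishing arguments concern only which products of structure constants are nonzero.
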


We are therefore able to obtain an example in dimension $8$, obtained as an extension of the nilpotent Lie algebra $137B$ of Gong's classification.
%
\begin{theorem}
\label{thm:einstein_example}
The nilpotent Lie algebra
\[(0,0,0,0,12+34,14-23,-24+35+16,-13+26+45)\] 
has four Einstein metrics with scalar curvature $\frac{56}{15}$, namely
\begin{gather}
e^1\!\otimes\! e^1\! +\! e^2\!\otimes\! e^2\!\pm\!(\!e^3\!\otimes\! e^3\!+\! e^4\!\otimes\! e^4\!)\!
-\!\tfrac73 e^5\!\otimes\! e^5\! \mp\!\tfrac73 e^6\!\otimes\! e^6\!
\pm\! \tfrac{98}{15}(\!e^7\!\otimes\! e^7 \!+ \! e^8\!\otimes\! e^8\! );\label{eqn:einsteindiagonal}\\
-e^1\odot e^2 \mp e^3\odot e^4 + \frac73 e^5\otimes e^5\pm \frac73e^6\otimes e^6 \pm\frac{98}{15}e^7\odot e^8.\label{eqn:einsteinsigma}
\end{gather}

These metrics are not locally symmetric and their holonomy is generic, i.e. respectively $\SO_+(6,2)$, $\SO_+(3,5)$,  $\SO_+(5,3)$ and $\SO_+(4,4)$.
\end{theorem}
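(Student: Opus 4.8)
The plan is to verify the claimed formulae by a direct computation in the framework of Section~\ref{sec:momentmap}, exploiting the nice basis to reduce the Einstein equations to a manageable algebraic system. First I would record that the given Lie algebra $\g=T_a$ is unimodular (each $de^k$ is a sum of $e^{ij}$ with $i<j<k$) and nilpotent, hence has Killing form zero, so Proposition~\ref{prop:ricci_in_peace} applies; moreover the basis $e_1,\dots,e_8$ is nice in the sense of \cite{LauretWill:Einstein}, as one checks from the structure constants (each pair $\{i,j\}$ produces at most one $e_k$, and the pairs appearing with a common $k$ are disjoint or equal). By the Lemma of \cite{LauretWill:Einstein} quoted above, for any metric for which $e_1,\dots,e_8$ is orthogonal the Ricci tensor is diagonal in that basis; writing $\llangle e_i,e_i\rrangle=\epsilon_i s_i$ with $s_i>0$ and $\epsilon_i=\pm1$, Proposition~\ref{prop:ricci_in_peace} expresses each $\Ric(e_i,e_i)$ as an explicit rational function of the $s_i,\epsilon_i$ (a finite sum over structure constants of $\frac12\llangle de_i^\flat,de_i^\flat\rrangle-\frac12\llangle\ad e_i,\ad e_i\rrangle$). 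Setting $\Ric(e_i,e_i)=\lambda\,\epsilon_i s_i$ for all $i$ yields eight equations; solving this system (scaling one variable away) produces the diagonal solutions, giving the values $s_i$ listed in \eqref{eqn:einsteindiagonal} and the scalar curvature $\lambda\cdot 8=8\lambda$, which one normalizes to $\frac{56}{15}$ after fixing the overall scale; the sign choices $\pm$ correspond to the two ways of assigning $\epsilon_i$ consistent with the equations. The off-diagonal metrics \eqref{eqn:einsteinsigma} are obtained from \eqref{eqn:einsteindiagonal} by an explicit change of frame (replacing $e^1,e^2$ by $\frac1{\sqrt2}(e^1\pm e^2)$, etc.), under which the diagonal Einstein condition is preserved; alternatively one checks \eqref{eqn:einsteinsigma} directly with Lemma~\ref{lemma:ricci_in_general}.

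For the statement about holonomy and the non-symmetric claim, I would argue as follows. First, local symmetry is ruled out by computing $\nabla R$ and exhibiting a nonzero component — a finite computation from the Koszul formula of Section~1 — or more cheaply by noting that the Lie algebra is not a direct sum and producing a single nonvanishing entry of $\nabla R$. For the holonomy, the point is that a left-invariant metric on a simply connected nilpotent Lie group has restricted holonomy equal to $\SO_+(p,q)$ unless the metric is reducible or special in a way detected by the curvature; the cleanest route is Berger's classification together with the de~Rham--Wu decomposition: the holonomy algebra $\lie{hol}$ is a subalgebra of $\so(p,q)$ containing the image of the curvature operator $R(\g,\g)$ and closed under covariant derivatives, so I would show that the curvature endomorphisms $R(e_i,e_j)$ already span all of $\so(p,q)$ for the signature in question, which forces $\lie{hol}=\so(p,q)$ and hence, by connectedness, the restricted holonomy is $\SO_+(p,q)$; the four cases correspond to the four sign choices, whose signatures $(p,q)$ are read off from the diagonal entries in \eqref{eqn:einsteindiagonal} (two positive, then $\pm$ on four more, then $\pm$ on the last two, giving $(6,2),(3,5),(5,3),(4,4)$). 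The irreducibility needed to invoke Berger follows once the curvature operators span $\so(p,q)$, since no proper nondegenerate subspace can then be parallel, and the degenerate (Walker) case is excluded because a parallel degenerate distribution would again constrain $R(\g,\g)$ below the full $\so(p,q)$.

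I expect the main obstacle to be the holonomy computation rather than the Einstein verification: showing that $\{R(e_i,e_j)\}$ spans the full $\so(p,q)\cong\Lambda^2\R^8$ (dimension $28$) is a concrete but sizeable linear-algebra check, and one must be careful that it holds for all four sign choices simultaneously and that the spanning is genuine over $\R$ (not merely generically). I would organize this by fixing the orthonormal frame $\tilde e_i=e_i/\sqrt{s_i}$ associated to \eqref{eqn:einsteindiagonal}, writing the curvature operators as explicit skew-symmetric matrices via the Koszul formula, and then row-reducing; the nice structure keeps these matrices sparse, so the rank computation is feasible. A secondary subtlety is the passage between restricted holonomy $\SO_+(p,q)$ and full holonomy: since the groups are simply connected the two coincide, so once $\lie{hol}=\so(p,q)$ is established the connected holonomy group is $\SO_+(p,q)$ as claimed. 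The remaining verifications — unimodularity, niceness, the Jacobi identity for $a$, and the arithmetic leading to $\frac{56}{15}$ — are routine and I would simply report them.
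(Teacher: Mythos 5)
Your proposal is correct and follows essentially the same route as the paper: a diagonal metric ansatz in the nice basis, the resulting diagonal Ricci operator solved/verified to give $\ric=\frac{7}{15}\id$ (hence $s=\frac{56}{15}$), and the holonomy/non-symmetry claims settled by checking that $\nabla R\neq0$ and that the curvature components span all of $\Lambda^2\g^*$. The one shaky point is the proposed shortcut for the metrics \eqref{eqn:einsteinsigma}: the frame change $e^1,e^2\mapsto\frac1{\sqrt2}(e^1\pm e^2)$ is not an automorphism of this Lie algebra, so it alters the structure constants and destroys the nice-basis property, meaning it does not reduce \eqref{eqn:einsteinsigma} to the already-verified diagonal case; your fallback of checking \eqref{eqn:einsteinsigma} directly via Lemma~\ref{lemma:ricci_in_general} is the correct route and is what the paper does.
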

\begin{proof}
Consider  the diagonal metric
\[g_1e^1\otimes e^1 + \dots + g_8 e^8\otimes e^8;\]
set $g_1=1$ for a normalization. Then
\begin{multline*}
 \ric = \frac12\operatorname{diag}\bigl(
-\frac{g_8}{g_3}-\frac{g_6}{g_4}-\frac{g_5}{g_2}-\frac{g_7}{g_6},
-\frac{g_7}{ g_2 g_4}-\frac{g_5}{g_2}-\frac{g_8}{ g_2 g_6}-\frac{g_6}{ g_2 g_3},\\
-\frac{g_7}{ g_5 g_3}-\frac{g_8}{g_3}-\frac{g_5}{ g_3 g_4}-\frac{g_6}{ g_2 g_3},
-\frac{g_8}{ g_5 g_4}-\frac{g_7}{ g_2 g_4}-\frac{g_6}{g_4}-\frac{g_5}{ g_3 g_4},\\
-\frac{g_7}{ g_5 g_3}-\frac{g_8}{ g_5 g_4}+\frac{g_5}{g_2}+\frac{g_5}{ g_3 g_4},
\frac{g_6}{g_4}-\frac{g_8}{ g_2 g_6}+\frac{g_6}{ g_2 g_3}-\frac{g_7}{g_6},\\
\frac{g_7}{ g_5 g_3}+\frac{g_7}{ g_2 g_4}+\frac{g_7}{g_6},
\frac{g_8}{ g_5 g_4}+\frac{g_8}{g_3}+\frac{g_8}{ g_2 g_6} \bigr).
\end{multline*}
Thus, if we take the metrics \eqref{eqn:einsteindiagonal}, we find $\ric=\frac{7}{15}\id$; \eqref{eqn:einsteinsigma} is obtained in a similar way.

Computations show that the curvature tensor is not parallel and its components span all of $\Lambda^2\g^*$, forcing the holonomy to be generic. 
\end{proof}
\begin{example}
The nilpotent Lie algebra
\begin{multline*}
\bigl(0,0,4 \sqrt{3}e^{12},-\sqrt{\frac{5}{2}}e^{13},\sqrt{\frac{5}{2}}e^{23},3 \sqrt{\frac{7}{2}}e^{24}-3 \sqrt{\frac{7}{2}}e^{15},\\
 \sqrt{21}e^{34}+2 \sqrt{3}e^{25}+4 \sqrt{2}e^{16},4 \sqrt{2}e^{26}+\sqrt{21}e^{35}+2 \sqrt{3}e^{14}\bigr)
\end{multline*}
admits the diagonal Lorentzian Einstein metric 
\[e^1\otimes e^1+\dots + e^5\otimes e^5 - e^6\otimes e^6+e^7\otimes e^7+e^8\otimes e^8;\]
in this case $s=4$.  The systematic construction of examples of this type will be illustrated in \cite{ContiRossi:DraftEinsteinNice}.
\end{example}

Together with Theorem~\ref{thm:einstein_example} and the fact that sign changes of the metric preserve the Einstein condition, this example gives the following:
\begin{corollary}
For any indefinite signature in dimension 8, there exists a nilpotent Lie algebra with an Einstein metric of nonzero scalar curvature of that signature.
\end{corollary}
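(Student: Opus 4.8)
The plan is to assemble the list of all indefinite signatures in dimension $8$ — namely the pairs $(p,q)$ with $p+q=8$ and $p,q\geq 1$, that is $(1,7),(2,6),(3,5),(4,4),(5,3),(6,2),(7,1)$ — and to exhibit an Einstein metric of nonzero scalar curvature in each of them by combining the two eight-dimensional examples already constructed with the observation that $g$ and $-g$ have the same Ricci tensor.

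First I would record the effect of a global sign change: if $g$ is Einstein with $\Ric = \lambda g$ and $\lambda\neq 0$, then $-g$ satisfies $\Ric = (-\lambda)(-g)$, so $-g$ is again Einstein; moreover its scalar curvature is $-s_g\neq 0$ and its signature is the reverse $(q,p)$ of that of $g$. Thus each example produces Einstein metrics of nonzero scalar curvature in a \emph{pair} of mutually reversed signatures.

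Next I would invoke Theorem~\ref{thm:einstein_example}: the nilpotent Lie algebra considered there carries Einstein metrics of scalar curvature $\tfrac{56}{15}\neq 0$ in the signatures $(6,2),(3,5),(5,3),(4,4)$; passing to the opposite metrics adds $(2,6)$, so that all of $(2,6),(3,5),(4,4),(5,3),(6,2)$ are covered. The two Lorentzian signatures $(7,1)$ and $(1,7)$ remain: these are supplied by the Example preceding the statement, whose eight-dimensional nilpotent Lie algebra admits a diagonal Einstein metric of signature $(7,1)$ with $s=4\neq 0$, together with its opposite of signature $(1,7)$. Since the union of these signatures exhausts $\{(p,q):p+q=8,\ p,q\geq 1\}$, the corollary follows.

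There is essentially no obstacle here beyond bookkeeping: the only point requiring care is to check that the signatures arising from Theorem~\ref{thm:einstein_example}, from the Example, and from their sign reversals together give every indefinite signature in dimension $8$, which a direct enumeration confirms.
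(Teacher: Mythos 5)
Your argument is correct and is exactly the paper's intended proof: the corollary is deduced from Theorem~\ref{thm:einstein_example} (signatures $(6,2),(3,5),(5,3),(4,4)$), the Lorentzian example with $s=4$ (signature $(7,1)$), and the fact that $g\mapsto -g$ preserves the Einstein condition while reversing the signature and negating the scalar curvature. Your enumeration of the resulting signatures, which the paper leaves implicit, is accurate and complete.
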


It is natural to ask whether a similar construction can be applied to the $7$-dimensional case. The answer turns out to be negative,  as illustrated by the following:
\begin{theorem}
\label{thm:nicenoeinstein}
If $\g$ is a nilpotent $7$-dimensional Lie algebra admitting  a nice basis, every Einstein metric on $\g$ is Ricci-flat.
\end{theorem}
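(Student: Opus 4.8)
The plan is to derive the statement from Theorem~\ref{thm:einstein_seven_in_table} by a finite verification. Recall that Table~\ref{table:nilpotenteinstein} consists exactly of the Lie algebras in Gong's classification satisfying $\Der(\g)\subset\Sl(\g)$, and that by Theorem~\ref{thm:einstein_seven_in_table} every Einstein metric on a nilpotent $7$-dimensional Lie algebra not appearing in the table is Ricci-flat. It therefore suffices to prove that none of the eleven Lie algebras of Table~\ref{table:nilpotenteinstein} admits a nice basis: a nilpotent $7$-dimensional Lie algebra with a nice basis will then fail to occur in the table, so Theorem~\ref{thm:einstein_seven_in_table} applies and all its Einstein metrics are Ricci-flat.

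The heart of the argument is the case analysis of these eleven Lie algebras, which I would organize into the four families \textbf{a}, \textbf{b}, \textbf{c}, \textbf{d} indicated in the table. For each family the aim is to rule out a nice basis, exploiting the rigidity of the nice condition: relative to any basis the nonzero structure constants $a^k_{ij}$ must be such that each unordered pair $\{i,j\}$ produces at most one $k$, and two pairs occurring with the same $k$ are either equal or disjoint; equivalently $a\in V^{11}_1$ must lie in one of the coordinate subspaces attached to a choice of nice diagram. A nice basis for one of the table entries would in particular have to be compatible with the lower central series and with the derived and central ideals, which confines the search to finitely many candidate bases; inspecting them and testing the two combinatorial axioms shows that none of the eleven algebras is nice. (Equivalently, one may cross-check the eleven Gong algebras against the classification of nice nilpotent Lie algebras in low dimension and observe that none of them occurs.) For any family where excluding a nice basis directly is awkward, I would instead use the diagonalization property of \cite{LauretWill:Einstein}: assuming the existence of a nice basis that is orthogonal for the metric, the Einstein condition becomes a system of rational equations in the squared norms $g_1,\dots,g_7$, with signs prescribed by the signature, exactly as in the proof of Theorem~\ref{thm:einstein_example}; one then checks that every solution has vanishing scalar curvature.

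The step I expect to be the main obstacle is precisely this verification, because non-existence of a nice basis is an assertion about \emph{all} bases rather than Gong's normal form, so one must control the whole $\GL(7,\R)$-orbit of each of the eleven structure tensors; this is where the completeness of Gong's classification is essential and where the bookkeeping concentrates, and the partition into the four families \textbf{a}--\textbf{d} is what keeps the combinatorics of the nice axioms tractable. Once it is carried out, combining with Theorem~\ref{thm:ostruzionegl} (through Theorem~\ref{thm:einstein_seven_in_table}) completes the proof, and in fact yields the sharper statement that every nilpotent $7$-dimensional Lie algebra with a nice basis admits a derivation of nonzero trace.
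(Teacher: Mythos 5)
Your reduction is the same as the paper's: by Theorem~\ref{thm:einstein_seven_in_table} it suffices to show that none of the eleven Lie algebras of Table~\ref{table:nilpotenteinstein} admits a nice basis, and you correctly identify that the difficulty is that non-niceness is a statement about \emph{all} bases, not about Gong's normal form. But the step you propose to resolve this does not work as stated: compatibility with the lower central series (and with the derived algebra and centre) does \emph{not} confine the search to finitely many candidate bases. The bases adapted to the flag $\g\supset\g^1\supset\g^2\supset\cdots$ form an orbit of a parabolic (hence positive-dimensional) subgroup of $\GL(7,\R)$, so there is a continuum of candidates and ``inspecting them'' is not a finite verification. Your fallback is also logically insufficient: the diagonalization lemma of \cite{LauretWill:Einstein} only controls metrics for which the nice basis is \emph{orthogonal}, so solving the resulting rational system in $g_1,\dots,g_7$ would not rule out Einstein metrics that are non-diagonal with respect to that basis; the theorem must cover every Einstein metric, which is exactly why the paper routes everything through the derivation obstruction of Theorem~\ref{thm:ostruzionegl} rather than through diagonal Ricci computations.

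The paper closes the gap by inverting the problem. It isolates four isomorphism-invariant conditions \textbf{a}--\textbf{d} (dimensions of the lower central series and vanishing of certain brackets such as $[\g^1,\g^1]$, $[\g^1,\g^3]$) satisfied by the eleven table algebras, and then \emph{classifies from scratch} all nice $7$-dimensional nilpotent Lie algebras satisfying each condition (Lemma~\ref{lemma:niceinfourfamilies}, Table~\ref{table:niceinfamilies}). This is a genuine structure-constant construction: one shows any nice basis may be taken adapted to the lower central series, normalizes brackets one at a time, and uses the Jacobi identity together with the two nice axioms to force a finite list \emph{up to isomorphism}. Matching that list against Gong's labels and noting it is disjoint from Table~\ref{table:nilpotenteinstein} finishes the proof. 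Your parenthetical suggestion of ``cross-checking against the classification of nice nilpotent Lie algebras in low dimension'' is essentially this, but no such classification is cited or available in the paper's references; producing the relevant portion of it is the actual content of the proof, and it is missing from your argument.
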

The proof uses the fact that the Lie algebras of Table~\ref{table:nilpotenteinstein} are not nice. To see this, we can divide the Lie algebras of Table~\ref{table:nilpotenteinstein} into four families characterized by the following conditions:
\begin{enumerate}[\bf a.]
	\item $(\dim \g^i) = (5,4,3,2,1)$, $\g^5 = Z$, $[\g^1,\g^1] = 0$;
	\item $(\dim \g^i)=(5,4,2,1)$,  $\g^4 = Z=[\g^1,\g^1] = [\g^1,\g^2] $,\mbox{ $[\g^2,\g^2]=0=[\g^1,\g^3]$};
	\item $(\dim \g^i)=(5,4,3,2,1)$, $\g^5 = Z=[\g^1,\g^1] = [\g^1,\g^2]$, $[\g^2,\g^2]=0=[\g^1,\g^3]$;
	\item $(\dim \g^i)=(4,3,2,1)$, $\g^4 = Z=[\g^1,\g^1] = [\g^1,\g^2]$, $[\g^2,\g^2]=0=[\g^1,\g^3]$.
\end{enumerate}
Here $(\dim \g^i)$ is short-hand notation for the dimensions of the lower central series $(\dim \g^1,\dots,\dim \g^k)$.

\begin{table}[tp]
\centering
\begin{tabular}{RLC}
\multirow{4}{*}{\textbf{a}} & 123457A & (0,0, 12, 13, 14, 15, 16) \\
& 123457B & (0,0, 12, 13, 14, 15, 16 +23)\\
& 123457D & (0,0, 12, 13, 14, 15 +23, 16 +24)\\
& 123457I (\lambda=1) & (0,0,12,13,14 + 23, 15 + 24, 16 + 25)\\
\midrule
\multirow{4}{*}{\textbf{b}} 
& 12457H & (0,0, 12, 13, 23, 15 + 24, 34 + 16)\\
& 12457I & (0,0, 12, 13, 23, 15 + 24, 34 + 16 +25)\\
& 12457L & (0,0, 12, 13, 23 , 14 - 25, 34 - 26) \\
& 12457L_1 & (0,0, 12, 13, 23 , 14 + 25, 34 - 26) \\
\midrule
\multirow{2}{*}{\textbf{c}} & 123457C & (0,0,12,13, 14 , 15, 16+34-25) \\
&123457I (\lambda\neq1) & (0,0,12,13, 14 +23, 15 +24, 16+(1-\lambda) 34+\lambda  25) \\
\midrule
\multirow{4}{*}{\textbf{d}} &12457C & (0,0,0, 12, 24, 13 + 25, 16 +45)\\
& 12457D & (0,0,0, 12, 24, 13 + 25,23+ 16 +45)\\
&13457C & (0,0,0, 12, 24, 25, 23 + 16 +45)\\
& 13457E & (0,0,0, 12, 24, 14 + 25, 23+ 16 +45)
\end{tabular}
\caption{\label{table:niceinfamilies}Nice nilpotent Lie algebras in the families \textbf{a}, \textbf{b}, \textbf{c} and \textbf{d}. The second column refers to Gong's classification; the correspondence may involve a change of basis.} 
\end{table}
\begin{lemma}
\label{lemma:niceinfourfamilies}
The $7$-dimensional nice nilpotent Lie algebras that satisfy the conditions~\textbf{a}, \textbf{b}, \textbf{c} or \textbf{d} are those of Table~\ref{table:niceinfamilies}.
\end{lemma}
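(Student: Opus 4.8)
\textbf{Proof plan for Lemma~\ref{lemma:niceinfourfamilies}.}

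The plan is to match the abstract structural conditions \textbf{a}--\textbf{d} against Gong's classification of 7-dimensional nilpotent Lie algebras, and for each Lie algebra that passes the structural filter, decide whether it admits a nice basis. The first step is to observe that conditions \textbf{a}--\textbf{d} already force the dimensions of the lower central series, and in each case the type of the Lie algebra (in the sense of Gong's naming scheme, e.g. $123457*$, $12457*$, $13457*$) is essentially determined by $(\dim\g^i)$ together with the prescribed vanishing of various brackets $[\g^i,\g^j]$. So one first goes through Gong's list and selects the finite set of Lie algebras compatible with each of \textbf{a}, \textbf{b}, \textbf{c}, \textbf{d}; this is a mechanical lookup, and it will already be a rather short list. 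The conditions are deliberately chosen so that each Lie algebra of Table~\ref{table:nilpotenteinstein} lands in exactly one family, which is how the four families were defined in the first place.

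The second step is, for each candidate Lie algebra surviving the filter, to check whether it is nice. For the ones appearing in Table~\ref{table:niceinfamilies}, this is done simply by exhibiting the nice basis: the structure equations in the third column of that table are manifestly in nice form (each $de^k$ is a sum of terms $e^{ij}$ with the two combinatorial conditions from the definition of nice basis visibly satisfied), so one only needs to remark that the quoted structure constants do define the named Lie algebra, possibly after the change of basis alluded to in the caption. For the candidate Lie algebras \emph{not} in Table~\ref{table:niceinfamilies} --- these are exactly the Lie algebras of Table~\ref{table:nilpotenteinstein} --- one must prove they are \emph{not} nice; but this we may take as already established, since it is precisely the content used in the proof of Theorem~\ref{thm:nicenoeinstein}, and indeed the lemma is stated as the tool for that theorem. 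Concretely, non-niceness of a given 7-dimensional Lie algebra is checked by the characterization of nice $a\in V^{11}_1$ in terms of the weight spaces $\Gamma^i_j\otimes\Gamma^{k_{ij}}\cap\Gamma^{ij}\otimes\Gamma_{h_{ij}}$: one shows that no basis of the Lie algebra can bring $a$ into that subspace, which reduces to a finite (if tedious) check on the orbit of $a$ under $\GL(7,\R)$, or more practically to the invariant-theoretic obstructions to niceness developed in \cite{LauretWill:diagonalization}.

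The main obstacle is the exhaustiveness of the second step: verifying that the candidate list for each family is complete and correctly partitioned, and in particular that every nice Lie algebra meeting the structural conditions really does appear in Table~\ref{table:niceinfamilies} and every non-nice one is accounted for in Table~\ref{table:nilpotenteinstein}. This is a finite case analysis driven entirely by Gong's classification (as reproduced in \cite{ContiFernandez:calibrated}), so there is no conceptual difficulty, only the bookkeeping of going type by type through the relevant names, reading off the lower central series and the bracket relations, and for each either writing down a nice basis or invoking the non-niceness obstruction. I would present it as a table-driven case check, relegating the routine per-Lie-algebra verifications to the tables themselves and spelling out in the text only one representative computation of each kind (one nice basis exhibited explicitly, one non-niceness argument).
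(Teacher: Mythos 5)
Your proposal runs the argument in the wrong direction and, at the crucial point, becomes circular. You propose to filter Gong's list by the conditions \textbf{a}--\textbf{d} and then, for each surviving candidate \emph{not} in Table~\ref{table:niceinfamilies} (i.e.\ the Lie algebras of Table~\ref{table:nilpotenteinstein}), to ``take as already established'' that they are not nice because ``it is precisely the content used in the proof of Theorem~\ref{thm:nicenoeinstein}.'' But that non-niceness is nowhere established independently in the paper: it is \emph{deduced from} Lemma~\ref{lemma:niceinfourfamilies} in the proof of Theorem~\ref{thm:nicenoeinstein} (a nice algebra satisfying \textbf{a}--\textbf{d} would have to appear in Table~\ref{table:niceinfamilies}, and the Gong labels are disjoint). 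You cannot feed the conclusion back in as an input. Your fallback --- ``a finite check on the orbit of $a$ under $\GL(7,\R)$'' or unspecified invariant-theoretic obstructions --- is not a proof: the $\GL(7,\R)$-orbit is a continuum, and showing that \emph{no} basis puts $a$ into the nice subspace is exactly the hard direction that your plan leaves unaddressed.

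The paper avoids this entirely by arguing forward: for each of \textbf{a}--\textbf{d} it \emph{assumes} a nice basis exists, normalizes it to be compatible with the lower central series (so $[e_i,e_j]$ is always a multiple of some later $e_h$), and then uses the Jacobi identity together with the two combinatorial constraints of the nice condition to pin down every bracket up to rescaling, case by case. This produces an exhaustive, explicitly parametrized list of the nice Lie algebras in each family, which is then matched against Gong's labels --- and none of them coincide with the entries of Table~\ref{table:nilpotenteinstein}. So the non-niceness of the Table~\ref{table:nilpotenteinstein} algebras is a \emph{corollary} of the classification, never a separate verification. If you want to salvage your reverse approach, you would need an actual, checkable obstruction to niceness for each of the eleven algebras of Table~\ref{table:nilpotenteinstein}, which is substantially harder than the constructive enumeration the paper carries out.
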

\begin{proof}
We construct explicitly the Lie algebras in each family. For each condition from~\textbf{a} to~\textbf{d}, suppose there exists a nice basis $\{e_1, \dots, e_7\}$, that we can assume to be compatible with the lower central series in the sense that each $\g^k$ is spanned by the last $\dim \g^k$ elements of the basis; this implies that each $[e_i, e_j]$ is a multiple of some $e_h$ with $h>i,j$.

In the first three cases $e_3$ belongs to $\g^1$; since multiplying $e_3$ by a nonzero constant does not affect the nice basis condition, we can assume that $[e_2,e_1]=e_3$. 

\textbf{Case a.} 
As $e_4\in\g_2$, we can assume one of $[e_3,e_1]$, $[e_3,e_2]$ equals $e_4$; up to interchanging $e_1$ and $e_2$, we can assume $[e_3,e_1]=e_4$. Set $Y_3=[e_3,e_2]$, $Y_4=[e_4,e_2]$; the Jacobi identity for $\{e_1,e_2,e_3\}$ yields
\begin{equation}
 \label{eqn:jacobi123}
0=[[e_1,e_2],e_3]+[[e_2,e_3],e_1]+[[e_3,e_1],e_2]=Y_4-[Y_3,e_1].
\end{equation}
Because the basis is nice and $[e_3,e_1]=e_4$, $Y_3$ is in $\g^3$; by \eqref{eqn:jacobi123}, $Y_4=[Y_3,e_1]\in\g^4$. Since $e_5\in\g^3$, we can assume $[e_4,e_1]=e_5$.

If $Y_4=0$, the Jacobi identity on $\{e_1,e_2,e_4\}$ gives $[e_5,e_2]=0$. By the same token  we have that $[e_5,e_1]=e_6$, and  the Jacobi identity on $\{e_1,e_2,e_5\}$ implies $[e_6,e_2]=0$, so that $[e_6,e_1]=e_7$. The nice basis condition implies that $Y_3\in\g^2$ is a multiple of $e_5$, $e_6$, or $e_7$, but \eqref{eqn:jacobi123} rules out the first two. We therefore obtain the Lie algebra \[(0,0,12,13,14,15,16+\lambda 23), \quad \lambda\in\R.\]
If $\lambda\neq0$, we can change to the basis $\{e^1, \lambda e^2,\lambda e^3,\lambda e^4,\lambda e^5, \lambda e^6,\lambda e^7\}$ and reduce to the case $\lambda=1$; this produces the first two entries of Table~\ref{table:niceinfamilies}.

If $Y_4$ spans $Z$, say $Y_4=[e_4,e_2]=e_7$, then $[e_6,e_2]=0$ and $[e_6,e_1]=\gamma e_7$, $\gamma\neq0$. The Jacobi identity on $\{e_1,e_2,e_4\}$ gives $[e_5,e_2]=0$, so we can assume $[e_5,e_1]=e_6$. Equation \eqref{eqn:jacobi123} gives $[e_3,e_2]=\frac{1}{\gamma}e_6$, and the resulting Lie algebra is 
\[(0,0,12,13,14,15+\frac{1}{\gamma}23,\gamma 16+24);\]
a suitable change of basis gives $123457D$.

Suppose now that $Y_4$ is not in $Z$, say  $Y_4=[e_4,e_2]=e_6$. Again, $Y_3$ is a multiple of $e_5$, $e_6$, or $e_7$, but the nice basis condition and \eqref{eqn:jacobi123} rule out the last two; so $Y_3=\mu e_5$, $\mu\neq0$ and $[\mu e_5,e_1]=e_6$; consequently, $[e_2,e_5]$ is proportional to $e_7$. The Jacobi identity on $\{e_1,e_2,e_5\}$ implies $[e_6,e_2]=0$; thus, we can write $[e_6,e_1]= e_7$. Finally, apply the Jacobi identity on $\{e_1,e_2,e_4\}$, which gives $[e_5,e_2]= e_7$; we find
\[(0,0,12,13,14+\mu 23, \frac{1}{\mu}15+24, 16 + 25).\]
The parameter can be eliminated with a change of basis, giving $123457I(\lambda=1)$.

\textbf{Case b.} By hypothesis, $e_7$ lies in $[e_3,\g^2]$; with no loss of generality, suppose that $[e_4,e_3]=e_7$ and every other Lie bracket in $\g^1$ is zero. The basis elements $e_4$ and $e_5$ can only be obtained as $[e_3,e_1]$, $[e_3,e_2]$; interchanging $e_1$ and $e_2$ if needed, we can assume  $[e_3,e_1]=e_4$, $[e_3,e_2]=e_5$.  Writing down the Jacobi identity on $\{e_1,e_2,e_3\}$, we obtain that $[e_5,e_1]$ is equal to $[e_4,e_2]$, and therefore a multiple of $e_6$, as $[e_4,e_3]=e_7$.

Suppose that $[e_5,e_1]=e_6=[e_4,e_2]$; then, $[e_4,e_1]$ is forced to be zero, and  the Jacobi identity on $\{e_1,e_2,e_4\}$ gives $[e_6,e_1]=e_7$, hence $[e_6,e_2]=0$. Finally, the nice basis condition implies that $[e_5,e_2]=\gamma e_7$, $\gamma\in\R$, leading to 
\[(0,0, 12, 13, 23, 15 + 24, 34 + 16 + \gamma 25);\]
this is $12457H$ for $\gamma=0$, and otherwise isomorphic to $12457I$.

Suppose now that $[e_5,e_1]=0=[e_4,e_2]$. The nice basis condition implies that $[e_4,e_1]$ is a multiple of $e_6$; the Jacobi identity on $\{e_1,e_2,e_4\}$ gives $[[e_4,e_1],e_2]=-e_7$, so we can assume $[e_4,e_1]=e_6$, $[e_6,e_2]=-e_7$, implying in turn that $[e_6,e_1]=0$ and $[e_5,e_2]=\lambda e_6$. Thus, the Lie algebra has structure equations 
\[(0,0, 12, 13, 23 , 14 + \lambda 25, 34 - 26),\]
where $\lambda\neq0$ because $e_5$ is not in the centre. It is now a matter of changing the basis to obtain $12457L$ when $\lambda>0$, and $12457L_1$ otherwise.

\textbf{Case c.} Arguing as in case \textbf{a},  we can assume $[e_2,e_1]=e_3$ and $[e_3,e_1]=e_4$; moreover the conditions $[\g^1,\g^2]=Z$ and $[\g^1,\g^3]=0$ give $[e_4,e_3]=\gamma e_7$, $\gamma\neq0$. Because the basis is compatible with the lower central series, $e_5$ is a multiple of either $[e_4,e_1]$ or $[e_4,e_2]$, and $e_6$  of  $[e_5,e_1]$ or $[e_5,e_2]$.

Suppose that $[e_5,e_2]=e_6$; then $[e_5,e_1]$ is a multiple of $e_7$ and the Jacobi identity on $\{e_1,e_2,e_5\}$ gives $[e_6,e_1]=0$, so we can assume that $[e_6,e_2]=e_7$. By the Jacobi identity on $\{e_1,e_2,e_4\}$, $e_5$ cannot be a multiple of $[e_4,e_1]$, so it must be a multiple of $[e_4,e_2]$; but then $[e_2,e_3]=0$, contradicting the Jacobi identity on $\{e_1,e_2,e_3\}$.

This shows that $[e_5,e_1]=e_6$, and the same Jacobi identities as before imply that, up to rescalings, $[e_6,e_2]=0$, $[e_6,e_1]=e_7$ and  $[e_4,e_1]=e_5$, so that $[e_4,e_2]$ is a multiple of $e_6$. The Jacobi identity on $\{e_1,e_2,e_3\}$ implies that $[e_2,e_3]$ is not a multiple of $e_6$. Summing up, the nice basis condition gives 
\[[e_3,e_2]=\alpha e_5,\quad [e_4,e_2]=\beta e_6, \quad [e_5,e_2]=\nu e_7, \quad \alpha, \beta, \nu\in\R;\] 
the Jacobi identity is only satisfied if $\alpha=\beta=\gamma+\nu$, resulting in the Lie algebra
\[(0,0,12,13, 14 +\alpha 23, 15 +\alpha24, 16+\gamma34+(\alpha-\gamma)25).\]
This is isomorphic to $123457C$ if $\alpha=0$, and otherwise to $123457I$, with $\lambda=1-\frac{\gamma}{\alpha}$.

\textbf{Case d.} The characterization of this case and the nice basis condition imply that the only non-vanishing Lie bracket in $\{e_4,\dots e_7\}$ has the form $[e_5,e_4]=e_7$. We can write $[e_2,e_1]=e_4$, because $e_4\in\g^1$. Furthermore, since both $[e_5,e_2]$ and $[e_5,e_1]$ are multiples of $e_6$, we can assume $[e_5,e_1]=0$; the Jacobi identity on $\{e_1,e_2,e_5\}$ then gives $e_7=[[e_5,e_2],e_1]$, and it is no loss of generality to assume $[e_6,e_1]=e_7$, $[e_5,e_2]=e_6$. 

Since $e_5\in\g^2$, we have $[e_4,Y]=e_5$, where $Y$ is a multiple of either $e_1$, $e_2$ or $e_3$. The Jacobi identity on $\{e_1,e_2,e_4\}$ gives $0=[[e_4,e_2],e_1]=[[e_4,e_1],e_2]$,  ruling out the first case,  and the third is ruled out by the Jacobi identity on $\{e_2,e_3,e_4\}$, because $[[e_4,e_3],e_2] = [[e_2,e_3],e_4]+[[e_4,e_2],e_3]\in Z$. The only possibility is then $[e_4,e_2]=\mu e_5$. 

The nice basis condition implies $[e_3,e_2]=\gamma e_7$, $[e_4,e_3]=\lambda e_6$; the Jacobi identity for  $\{e_1,e_2,e_3\}$ and $\{e_1,e_3,e_4\}$ imply respectively $[[e_3,e_1],e_2]=\lambda e_6$ and $[[e_1,e_3],e_4]=\lambda e_7$. Thus, $[e_4,e_3]=0$ and $[e_3,e_1]$ has no component along $e_5$. Again, by the nice basis condition we can write $[e_3,e_1]=\alpha e_6$, $[e_4,e_1]=\beta e_6$, where one of $\alpha$ and $\beta$ must be zero; moreover, since $e_3\notin Z$, $\alpha$ and $\gamma$ cannot both be zero. The Lie algebra has structure equations 
\[(0,0,0, 12,\mu 24, \alpha 13 + \beta 14 + 25, \gamma 23 +16 +45).\]
For each of the four possible cases: 
\[\alpha\neq0,\beta=\gamma=0; \quad \alpha,\gamma\neq0,\beta=0;\quad  \beta,\gamma\neq0,\alpha=0; \quad \gamma\neq0, \alpha=\beta=0,\] 
we can change the basis to obtain one of the Lie algebras listed in Table~\ref{table:niceinfamilies}.

It is now easy to verify the correspondence between the Lie algebras we have constructed and the list of Gong \cite{Gong}.
\end{proof}

\begin{proof}[Proof of Theorem~\ref{thm:nicenoeinstein}]
By Theorem~\ref{thm:einstein_seven_in_table}, it suffices to show that the Lie algebras of Table~\ref{table:nilpotenteinstein} are not nice. By Lemma~\ref{lemma:niceinfourfamilies}, any nice Lie algebra in  Table~\ref{table:nilpotenteinstein} should also belong to  Table~\ref{table:niceinfamilies}; since Lie algebras with different labels in Gong's classification are not isomorphic over $\R$, the intersection is empty.
\end{proof}

\smallskip
\textbf{Acknowledgments}. We thank G.~Calvaruso, A.~Fino, A.C.~Ghigi, H.~Kasuya, J.~Lauret and the referee for useful remarks.

\bibliography{einstein}

\end{document}